\newcommand\C{\mathbb C}
\newcommand\R{\mathbb R}
\newcommand\Z{\mathbb Z}
\newcommand{\re}{\operatorname{Re}}
\newcommand{\im}{\operatorname{Im}}
\newcommand{\hth}{\hat{\theta}}
\newcommand{\ii}{\sqrt{-1}}
\newcommand{\del}{\partial}
\newcommand{\delbar}{\overline{\partial}}
\newcommand{\Vol}{\operatorname{Vol}}
\newcommand{\rp}{\right)}
\newcommand{\lp}{\left(}
\newcommand{\sint}{\sin \hth}
\newcommand{\cost}{\cos \hth}
\newcommand{\cott}{\cot \hth}
\newcommand{\TT}{\tau}
\makeatletter \@addtoreset{equation}{section} \makeatother
\newtheorem{thm}{Theorem}
\newtheorem{prop}[thm]{Proposition}
\newtheorem{lem}[thm]{Lemma}
\newtheorem{cor}[thm]{Corollary}
\theoremstyle{definition}
\newtheorem{rmk}[thm]{Remark}
\newcommand\narrowdots{\hbox to 1em{.\hss.\hss.}}
\title[$\textrm{d}$HYM connections in a variable background]{Examples of $\textrm{d}$HYM connections\\ in a variable background}
\date{5 November 2020}
\author{Enrico Schlitzer}
\email{eschlitz@sissa.it}
\author{Jacopo Stoppa}
\email{jstoppa@sissa.it}
\address{SISSA, via Bonomea 265, 34136 Trieste, Italy}
\address{Institute for Geometry and Physics, Via Beirut 2, 34151 Trieste, Italy}
\begin{document}
\begin{abstract} We study deformed Hermitian Yang-Mills (dHYM) connections on ruled surfaces explicitly, using the momentum construction. As a main application we provide many new examples of dHYM connections coupled to a variable background K\"ahler metric. These are solutions of the moment map partial differential equations given by the Hamiltonian action of the extended gauge group, coupling the dHYM equation to the scalar curvature of the background. The large radius limit of these coupled equations is the K\"ahler-Yang-Mills system of \'Alvarez-C\'onsul, Garcia-Fernandez and Garc\'ia-Prada, and in this limit our solutions converge smoothly to those constructed by Keller and T{\o}nnesen-Friedman. We also discuss other aspects of our examples including conical singularities, realisation as B-branes, the small radius limit and canonical representatives of complexified K\"ahler classes.   
\end{abstract}

\maketitle
\section{Background and main results} 
\subsection{dHYM connections} Let $L \to X$ denote a holomorphic line bundle over a compact $n$-dimensional K\"ahler manifold, with a fixed background K\"ahler form $\omega$. A hermitian metric $h$ on the fibres of $L$ determines the two notions of \emph{Lagrangian phase} and \emph{radius} of the line bundle. Namely, writing $F(h) = \ii F$, $F \in \mathcal{A}^{1,1}(X, \R)$ for the curvature of the Chern connection, one introduces the endomorphism of the tangent bundle given by $\omega^{-1}F$, with eigenvalues $\lambda_i$. Then we have
\begin{align*}
\frac{(\omega - \ii F)^n}{\omega^n} &= \prod^n_{i=1}(1-\ii\lambda_i) 
= r_{\omega}(F) e^{\ii\Theta_{\omega}(F)},
\end{align*}
where the Lagrangian phase and radius are defined (using the background metric $\omega$) respectively as
\begin{align*}
&\Theta_{\omega}(F) = -\sum^n_{i=1}\arctan(\lambda_i),\,
 r_{\omega}(F) = \prod^n_{i=1}(1+\lambda^2_i)^{1/2}.
\end{align*} 
The \emph{deformed Hermitian Yang-Mills (dHYM) equation} (introduced in \cite{leungYau,marino} and surveyed in \cite{collinsShi, collinsXie}) is the condition of having constant Lagrangian phase, 
\begin{equation}\label{dHYMintro}
\Theta_{\omega}(F) = \hth \mod 2\pi.
\end{equation} 
The work of Conan Leung, Yau and Zaslow \cite{leungYau} shows that, at least under suitable assumptions, the dHYM equation \eqref{dHYMintro} is mapped to the special Lagrangian equation under mirror symmetry. Therefore this equation has attracted considerable interest in mathematical physics and complex differential geometry (see e.g. the foundational works \cite{collinsJacobYau, collinsYau, jacob} and the recent contributions \cite{hanJinStab, hanJinChern, takaCollapse, takaTan}).

In the present paper we study dHYM connections in the very special case when $X$ is a complex ruled surface. While this is a classical test bed for equations in complex differential geometry, here we allow a rather general setup, as we now discuss.

\subsection{Variable background dHYM} Most importantly, we couple the dHYM equation \eqref{dHYMintro} to a variable background K\"ahler metric $\omega$, through the equations  
\begin{align}\label{coupled_dHYM_Intro}
\begin{cases}
\Theta_{\omega}(F) = \hth \mod 2\pi\,\\
s(\omega) - \alpha r_{\omega}(F) = \hat{s} - \alpha \hat{r}, 
\end{cases} 
\end{align}
where $s(\omega)$, $\hat{s}$, $\hat{r}$ denote the scalar curvature and its average, respectively the average radius, and $\alpha \in \R$ is an arbitrary coupling constant. The quantities $\hat{s}$, $\hat{r}$ and $\hth$ are fixed by cohomology, and in particular we have
\begin{equation*}
\hat{r} = \frac{1}{n!\Vol(M,\omega)}\left|\int_{X} (\omega- F(h))^n\right|
\end{equation*}
and 
\begin{equation*}
e^{-\ii\hth}= \frac{1}{n!\Vol(M,\omega)\hat{r}}\int_{X} (\omega- F(h))^n.
\end{equation*}
The equations \eqref{coupled_dHYM_Intro} are obtained by combining the moment map pictures for dHYM connections (due to Thomas and Collins-Yau, see \cite{collinsYau, richard}) and for constant scalar curvature K\"ahler (cscK) metrics (due to Donaldson and Fujiki, see \cite{donaldson, fujiki}) in a very natural way, through the action of the extended gauge group (a canonical extension of the group of unitary gauge transformations by Hamiltonian symplectomorphisms): this is explained in \cite{us}, building on the results of \cite{AGG}. The coupling constant $\alpha$ is a scale parameter for the relevant symplectic form on the space of integrable connections. Thus, only the case when $\alpha > 0$ corresponds to a genuine K\"ahler reduction (rather than just a symplectic reduction). 

Let $\Sigma$ be a compact Riemann surface of genus $h$, with K\"ahler metric $g_{\Sigma}$ of constant scalar curvature $2 s_{\Sigma}$, and let $\mathcal{L} \xrightarrow{p}\Sigma$ denote a holomorphic line bundle of degree $k \in \Z_{> 0}$, with $2\pi c_1  \left(\mathcal{L} \right) = \left[ \omega_{\Sigma} \right]$. 
Since $\text{Vol}\left( \Sigma \right)= 2\pi k$, by the Gauss-Bonnet formula we have
\begin{equation*}
s_{\Sigma} = \frac{1}{\text{Vol}\left( \Sigma \right)}\int_{\Sigma} s_{\Sigma} \omega_{\Sigma} = \frac{1}{\text{Vol}\left( \Sigma \right)}\int_{\Sigma} \rho_{\Sigma}=\frac{2 \left( 1-h\right)}{k},
\end{equation*}
where $\rho_{\Sigma}$ denotes the Ricci 2-form of $g_{\Sigma}$.

We will construct solutions of the coupled equations \eqref{coupled_dHYM_Intro} on ruled surfaces of Hirzebruch type, obtained by the projectivization 
\begin{equation*}
X = \mathbb{P} \left(\mathcal{L}\oplus \mathcal{O} \right) \to \Sigma, 
\end{equation*}
where $\mathcal{O}$ denotes the trivial holomorphic line bundle. (It is well known that such $X$ does not admit cscK metrics). Our solutions are obtained by extending the classical \emph{momentum construction} (also known as the Calabi ansatz, see \cite{hwang}) to the equations \eqref{coupled_dHYM_Intro}: see \eqref{ansatzMetric}, \eqref{ansatzCurvature} for our ansatz.

Let $E_0 = \mathbb{P}\left(0\oplus \mathcal{O} \right)$ and $E_{\infty} = \mathbb{P}\left(\mathcal{L}\oplus 0 \right)$ denote respectively the zero section and the infinity section of the $\mathbb{CP}^1$-bundle $X$ over $\Sigma$, with general fibre $C$. We introduce the real parameters $k_1, k_2$ and $k' > 0$, and consider the cohomology classes
\begin{align}\label{CohClasses}
\nonumber [\omega] &= 2\pi[ 2 E_0 + k' C],\\
[ F ] &= 2\pi[2 (k_1 - k_2)  E_0  + (2 k k_2 + k' (k_1  + k_2 )  C], 
\end{align}
where we slightly abuse the notation and denote the Poincar\'e duals of $E_0$ and $C$ respectively by $[E_0]$ and $[C]$.
Then $[\omega]$ is a K\"ahler class and $[F/(2\pi)]$ is integral, provided $k_1, k_2, k'$ are integers and $k'>0$, so it is possible to find a holomorphic line bundle $L \to X$ such that $-2\pi c_1(L) =\left[F\right] $. 

\begin{rmk}\label{symmetryRmk}
The equation \eqref{dHYMintro} is equivalent to 
\begin{equation*}
\im \lp e^{-\ii \hth} \lp \omega-\ii F \rp^n \rp=0
\end{equation*}
and the latter condition is preserved when $F \to -F$ and $\hth \to -\hth$, which should be interpreted geometrically as considering the dHYM equation on $L^{-1}$ instead of $L$. With our choice of parametrization, this implies that the set of parameters corresponding to solutions of the system \eqref{coupled_dHYM_Intro} is invariant under  $k_i \to -k_i$, for $i=1,2$.
When $k_2=0$, it follows from \eqref{CohClasses} that, for any choice of K\"ahler class, the unique solution of the dHYM equation \eqref{dHYMintro} is given by $F= k_1\omega$.
In this case,  the Lagrangian radius is also constant $r_{\omega}(k_1 \omega)= (1+k_1^2)^2$ and, since $X$ does not admit cscK metrics, the system \eqref{coupled_dHYM_Intro} has no solution.
In Section 2 it will be clear that also for $k_1=0$ the dHYM equation has a trivial solution; in this case, $\hth =0$ and we can solve also \eqref{coupled_dHYM_Intro}. 
In the following, we will focus on the less trivial choices of parameters, assuming
\begin{equation*}
k_1 < 0, \qquad k_2 \neq 0. 
\end{equation*}
\end{rmk}
It is also convenient to introduce the quantity
\begin{equation*}
x = \frac{k}{k+k'} \in (0, 1).
\end{equation*}
\begin{thm}\label{MainThmSmooth} Suppose the ``stability condition"
\begin{equation}\label{stabilityRuledSurfaces}
( 1 + ( k_1 + k_2 )^2 ) > x ( 1 + ( k_1 -k_2 )^2 )
\end{equation}
holds. Then, there exist a unique K\"ahler form $\omega$ and curvature form $F$, with cohomology classes given by \eqref{CohClasses}, such that they are obtained by the momentum construction (see \eqref{ansatzMetric}, \eqref{ansatzCurvature}) and solve the coupled equations \eqref{coupled_dHYM_Intro} on the ruled surface $X$, for the unique value of the coupling constant 
\begin{equation*} 
\alpha = \frac{\sqrt{4 k_1^2 + ( 1 - k_1^2 + k_2^2)^2}}{2 ( 1 + ( k_1 - k_2)^2) k_2^2}\left( -2 + s_{\Sigma}x \right).
\end{equation*}
If equality holds instead in \eqref{stabilityRuledSurfaces}, then there is a smooth solution on $X\setminus E_{\infty}$, with underlying metric $\omega \in C^{1, 1/2}(X) \cap C^{\infty}(X\setminus E_{\infty})$.
\end{thm}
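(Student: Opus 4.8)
The plan is to turn the two equations \eqref{coupled_dHYM_Intro} into a boundary value problem for ordinary differential equations in the momentum variable, and then to solve it explicitly. Writing $\omega$ and $F$ through the momentum construction \eqref{ansatzMetric}, \eqref{ansatzCurvature}, both forms are simultaneously diagonalised in the frame adapted to the fibrewise $S^1$-action, so that the endomorphism $\omega^{-1}F$ has two eigenvalues $\lambda_1(\tau),\lambda_2(\tau)$ which are explicit functions of the momentum variable $\tau$ and of the unknown momentum profile $\phi(\tau)$ of $\omega$ (here $\tau$ ranges over the interval fixed by $[\omega]$, and the $\lambda_i$ interpolate between the values dictated by \eqref{CohClasses} at the two sections, with radius $1+(k_1-k_2)^2$ at $E_0$ and $1+(k_1+k_2)^2$ at $E_\infty$). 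First I would substitute these into
\begin{align*}
\frac{(\omega - \ii F)^2}{\omega^2} &= (1 - \lambda_1\lambda_2) - \ii(\lambda_1 + \lambda_2),
\end{align*}
so as to read off $\Theta_\omega(F)$ and $r_\omega(F)$ as functions of $\tau$ and $\phi$.

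The first step is to dispatch the dHYM equation. Since constancy of $\Theta_\omega(F) = -\arctan\lambda_1 - \arctan\lambda_2$ is a single scalar condition at each $\tau$, I expect it to integrate: in the momentum construction $\im\lp e^{-\ii\hth}(\omega - \ii F)^2\rp$ should be a total $\tau$-derivative, so the equation reduces to a first order relation which, together with the cohomological normalisation of $[F]$ and the formula for $\hth$ recorded in the excerpt, determines the fibrewise profile of $F$ explicitly in terms of $\phi$. This eliminates $F$ and leaves $r_\omega(F)$ as a known function $r[\phi](\tau)$.

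Next I would impose the second equation. Using the standard formula for the scalar curvature of a Calabi-type metric over the cscK base $(\Sigma, g_\Sigma)$ (see \cite{hwang}) — which expresses $s(\omega)$ through the second derivative of $q\,\phi$ for the linear fibre weight $q(\tau)$ determined by $k,k'$ — the equation $s(\omega) - \alpha\, r[\phi] = \hat{s} - \alpha\hat{r}$ becomes a second order linear ODE for $\phi$ with $\alpha$ an undetermined constant. The structure $\tfrac{1}{q}(q\phi)'' = (\text{known})$ lets me integrate twice over $\tau$; the two Calabi boundary conditions ensuring smooth closure across $E_0$ and $E_\infty$ (namely $\phi=0$ with prescribed one-sided derivatives at the endpoints) then become solvability constraints. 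Matching the two endpoint conditions forces the single value of $\alpha$ stated in the theorem; the identity $4k_1^2 + (1 - k_1^2 + k_2^2)^2 = (1 + (k_1-k_2)^2)(1 + (k_1+k_2)^2)$ shows that its numerator is the geometric mean of the radii at the two sections, while the factor $-2 + s_\Sigma x$ reflects the topology through Gauss–Bonnet.

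The crux, and the main obstacle, is the positivity $\phi>0$ on the open momentum interval, which is exactly what guarantees that the resulting $\omega$ is a genuine Kähler metric closing up smoothly. I would show that, with $\alpha$ and the integration constants fixed as above, the explicit solution $\phi$ is strictly positive in the interior precisely when the strict inequality \eqref{stabilityRuledSurfaces} holds; monotonicity in $x$ and in the endpoint radii reduces this to the stated comparison $1 + (k_1+k_2)^2 > x(1 + (k_1-k_2)^2)$. In the boundary case of equality, $\phi$ still remains nonnegative but its one-sided derivative at the endpoint over $E_\infty$ fails to match the value required for smooth compactification; analysing the rate of vanishing of $\phi$ there shows that $\omega$ develops a conical-type degeneracy along $E_\infty$, being smooth on $X\setminus E_\infty$ and only of class $C^{1,1/2}$ across it. Uniqueness then follows because every step — the dHYM reduction, the value of $\alpha$, and the twice-integrated profile — is forced.
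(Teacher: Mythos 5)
Your reduction scheme is, in outline, the same as the paper's: the dHYM equation does integrate to a conserved quantity (the paper obtains it by separation of variables, yielding the explicit branches \eqref{signSolutions}), and the scalar curvature equation becomes $(2t\phi(t))''=$ a known function of $t$, which is integrated twice, with the endpoint conditions forcing the stated $\alpha$. The genuine gap is that you have misplaced where the ``stability condition'' \eqref{stabilityRuledSurfaces} enters. The dHYM step is \emph{not} unobstructed: exactness of $\ii\del\delbar g$, i.e.\ $[F]$ being the prescribed class, forces the profile $H$ to take prescribed values at \emph{both} endpoints, \eqref{bdryH}, while the integrated first-order equation carries a single constant $C$. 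Fixing $C$ and the branch $H_-$ by the condition at $t=1/x+1$, the condition at $t=1/x-1$ holds if and only if \eqref{stabilityRuledSurfaces} holds; when the inequality fails strictly there is no dHYM connection in the class within the momentum ansatz at all, so the dichotomy in the theorem is decided before the metric equation is ever considered. Consequently your claim that positivity of $\phi$ holds ``precisely when'' stability holds cannot be right as stated (when stability fails there is no profile to test, and the radius term $\sqrt{(\cot^2\hth+1)(t^2+C')}$ is not even real on the whole interval), and in any case ``monotonicity in $x$ and in the endpoint radii'' is not an argument. The paper proves positivity, under stability, by a concrete convexity argument: $\alpha<0$ (since $s_\Sigma\le 2$ and $x<1$), hence $d^4\psi/dt^4>0$ by \eqref{psiProp1}, combined with $\psi''(1/x-1)>\psi''(1/x+1)$ from \eqref{psiProp2} and the boundary conditions \eqref{boundaryConditionsPsi}. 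Two further points your write-up glosses over: the reason the double integration is legitimate is that $\phi$ factors out of the dHYM equation \eqref{dHYMsVariable}, so $H$ and hence the radius depend on $t$ alone (your notation $r[\phi](\tau)$ suggests otherwise, which would ruin the explicit integration); and there are four scalar endpoint conditions against only three constants $d_0,d_1,\alpha$, the system being compatible only because $\hat{s}-\alpha\hat{r}$ is the correct cohomological average.

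Your description of the equality case is also incorrect. You predict a ``conical-type degeneracy'' caused by a mismatch of the one-sided derivative of $\phi$ at $E_\infty$; but a conical singularity has unbounded metric coefficients in these coordinates and is incompatible with the $C^{1,1/2}$ regularity asserted in the statement (conical solutions are the subject of Theorem \ref{MainThmConic}, where $\phi'\to-\beta_\infty\neq-1$, and they arise from changing the boundary data, not from saturating \eqref{stabilityRuledSurfaces}). What actually happens at equality is that $t^2+C'$ vanishes at the endpoint $t=1/x-1$, so the explicit formulas \eqref{signSolutions} and \eqref{ODEscalarGenSol} acquire terms proportional to half-integer powers $(t-(1/x-1))^{1/2}$ and $(t-(1/x-1))^{3/2}$: the boundary conditions themselves still hold (they are closed conditions, satisfied by continuity in the parameters), and it is the higher regularity that fails, giving $F\in C^{1/2}$ and $\omega\in C^{1,1/2}$, smooth away from $E_\infty$.
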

Theorem \ref{MainThmSmooth} is proved in Section \ref{MainThmSmoothSec}.
\subsection{Solutions with conical singularities} The main limitation of Theorem \ref{MainThmSmooth} concerns the sign of the coupling constant: it is straightforward to check that in the situation of that result we always have $\alpha < 0$, since $s_{\Sigma} \leq 2$ and $x<1$. In order to gain more flexibility we allow the background metric $\omega$ to develop conical singularities along the divisors $E_0$, $E_{\infty}$. Fix  $0 < \beta_0 \leq 1$ and let
\begin{equation*}
\beta_{\infty} = \frac{-2 + \beta_{0}(1+x)}{-1+x} \geq 1.
\end{equation*}
\begin{thm}\label{MainThmConic} Suppose the ``stability condition" \eqref{stabilityRuledSurfaces} holds. Then, there exist a unique K\"ahler form $\omega$ and curvature form $F$, such that they are obtained by the momentum construction (see \eqref{ansatzMetric}, \eqref{ansatzCurvature}), $\omega$ has conical singularities with cone angles $2\pi \beta_0$ along $E_0$ and $2\pi \beta_{\infty}$ along $E_{\infty}$, the corresponding cohomology classes (in the sense of currents) are given by \eqref{CohClasses}, and they solve the coupled equations \eqref{coupled_dHYM_Intro}, for the unique value of the coupling constant 
\begin{equation*}
\alpha = \frac{\sqrt{4 k_1^2 + ( 1 - k_1^2 + k_2^2)^2}}{2 ( 1 + ( k_1 - k_2)^2) k_2^2}\frac{3+x + s_{\Sigma}  x^2 - 3(1+x)\beta_0}{x}.
\end{equation*}
\end{thm}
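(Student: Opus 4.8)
The plan is to repeat the momentum construction used for Theorem~\ref{MainThmSmooth}, changing only the boundary behaviour of the momentum profile so as to open up the prescribed cone angles. Parametrising the ansatz \eqref{ansatzMetric}, \eqref{ansatzCurvature} by the moment map $\tau$ of the fibrewise $S^1$-action, which ranges over an interval $I=[\tau_-,\tau_+]$ of length fixed by $\int_C\omega$, the K\"ahler form is encoded by a momentum profile $\varphi(\tau)\ge 0$ vanishing at the endpoints, while the base scales by a fixed affine function $A(\tau)$ determined by \eqref{CohClasses}. Exactly as before, the dHYM equation --- the first line of \eqref{coupled_dHYM_Intro} --- is a first-order constraint that integrates explicitly, expressing the curvature profile, and hence $r_\omega(F)$, as an explicit function of $\tau$ and $\varphi$ with integration constants pinned down by $[F]$. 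Feeding this into the second line of \eqref{coupled_dHYM_Intro} and using the Hwang--Singer formula for the scalar curvature of a momentum metric (see \cite{hwang}) reduces the system to a single second-order ODE of the form $(A\varphi)''=G(\tau;\alpha)$ on $I$, now to be solved with the \emph{conical} endpoint conditions $\varphi(\tau_\pm)=0$, $\varphi'(\tau_-)=\beta_0$, $\varphi'(\tau_+)=-\beta_\infty$ in place of the smooth closure conditions $\varphi'(\tau_\mp)=\pm 1$; these are precisely what produce cone angles $2\pi\beta_0$ along $E_0$ and $2\pi\beta_\infty$ along $E_\infty$ while keeping the current class of $\omega$ equal to $2\pi[2E_0+k'C]$ for any choice of boundary slopes.

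Next I would extract the relation between $\beta_0$ and $\beta_\infty$ as the first integral of this ODE. Integrating $(A\varphi)''=G$ once over $I$ and using $\varphi(\tau_\pm)=0$, the left-hand side equals $-A(\tau_+)\beta_\infty-A(\tau_-)\beta_0$; on the right, the term linear in $\alpha$ is a multiple of $\int_X r_\omega(F)\,\omega^2/2-\hat r\,\Vol(X)$, which vanishes identically because the dHYM equation forces $\int_X r_\omega(F)\,\omega^2/2=\hat r\,\Vol(X)$. Thus the $\alpha$-dependence drops out and one is left with a purely cohomological identity, namely the vanishing of the conical Gauss--Bonnet deficit $(1-\beta_0)\int_{E_0}\omega+(1-\beta_\infty)\int_{E_\infty}\omega=0$; inserting $\int_{E_0}\omega=2\pi(2k+k')$, $\int_{E_\infty}\omega=2\pi k'$ and $x=k/(k+k')$ this reads $(1-x)\beta_\infty+(1+x)\beta_0=2$, equivalently $\beta_\infty=(-2+\beta_0(1+x))/(-1+x)$, which also explains why $\beta_\infty\ge 1$ whenever $0<\beta_0\le1$. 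With $\beta_\infty$ so determined, integrating the ODE a second time and imposing all four endpoint conditions fixes the two remaining constants of integration together with a unique admissible value of $\alpha$; assembling the dHYM radius prefactor this should reproduce the displayed formula, which by construction specialises to the coupling constant of Theorem~\ref{MainThmSmooth} when $\beta_0=1$ (the smooth case).

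The step I expect to be the main obstacle is showing that the explicit solution $\varphi$ is a genuine momentum profile, i.e.\ that $\varphi>0$ on the open interval $(\tau_-,\tau_+)$, since this is exactly the positivity of $\omega$: one must verify that the solution of the boundary value problem stays strictly positive in the interior precisely under the stability condition \eqref{stabilityRuledSurfaces}, positivity degenerating at $E_\infty$ as one approaches the boundary of that condition. The remaining point is to confirm the conical regularity: from the endpoint asymptotics $\varphi(\tau)\sim\beta_0(\tau-\tau_-)$ and $\varphi(\tau)\sim\beta_\infty(\tau_+-\tau)$, the standard analysis of the Calabi ansatz near the poles of the fibres shows that $\omega$ has genuine conical singularities of the claimed angles along $E_0$ and $E_\infty$, with the cohomology classes understood in the sense of currents. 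Uniqueness of $(\omega,F)$ then follows from the uniqueness of the solution of the ODE boundary value problem, together with the normalisation of the parameter range recorded in Remark~\ref{symmetryRmk}.
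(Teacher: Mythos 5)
Your overall strategy coincides with the paper's: by Lemma \ref{ConicalMetricLemma} the cone angles do not alter the current cohomology classes or the exactness conditions, so the dHYM part reduces to the same ODE \eqref{HdHYM} with boundary conditions \eqref{bdryH} as in the smooth case (this is where \eqref{stabilityRuledSurfaces} enters), and the scalar curvature equation reduces to the same second order ODE \eqref{ODEscalar} for $\psi=2t\phi$, now with the conical endpoint conditions of Lemma \ref{ConicalBoundaryLemma}; the overdetermined boundary value problem then pins down $\alpha$ and the relation between $\beta_0$ and $\beta_\infty$. Within this common framework, your derivation of the constraint $\beta_0(1+x)+\beta_\infty(1-x)=2$ is genuinely different from, and more conceptual than, the paper's: you integrate the ODE once over the interval and note that the terms linear in $\alpha$ integrate to a multiple of $\int_X\bigl(r_\omega(F)-\hat r\bigr)\,\omega^2$, which vanishes because the dHYM equation forces the pointwise radius to average to $\hat r$; what survives is the conical Gauss--Bonnet identity $(1-\beta_0)\int_{E_0}\omega+(1-\beta_\infty)\int_{E_\infty}\omega=0$. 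The paper instead solves explicitly for $d_1$, $d_0$ and $\alpha$ (formulae \eqref{linearTermCoeff} and \eqref{ConicalCouplingConstant}) and only afterwards evaluates $\psi'$ at the other endpoint to read off $\beta_\infty$. Your route explains a priori why $\beta_\infty$ cannot be prescribed independently and why the relation involves neither $\alpha$ nor $k_1,k_2$; this is a real, if modest, improvement in transparency, and your identification of which boundary data go with which divisor, and of the deficit identity, checks out against the paper's computations.

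There are, however, two places where you assert rather than prove. The explicit value of $\alpha$ is only claimed to ``reproduce the displayed formula''; this is routine elimination from the general solution \eqref{ODEscalarGenSol} and I would not count it as a gap. The genuine gap is positivity of the profile. You correctly flag that one must show $\varphi>0$ on the open interval, but you give no argument, and this is precisely where the analytic content of the theorem lies. Note that the smooth-case convexity argument does not transfer wholesale: it rests on \eqref{psiProp1}, whose sign is the sign of $-\alpha$, and with conical angles the coupling constant \eqref{ConicalCouplingConstant} can be positive --- indeed producing $\alpha>0$ is the entire motivation for Theorem \ref{MainThmConic}. The paper proves positivity analytically only in the regime $3(1+x)\beta_0-3>x(1+s_\Sigma x)$, where $\alpha<0$, via $d^4\psi/dt^4>0$ together with $\psi''(t_-)>\psi''(t_+)$ and the boundary conditions; for $\alpha>0$ even the paper falls back on a numerical verification of the momentum profile (its Figure 1). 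So your expectation that positivity holds ``precisely under the stability condition'' is exactly the claim requiring proof, and it is not a consequence of \eqref{stabilityRuledSurfaces} by any argument you have outlined (the stability condition governs solvability of the dHYM boundary value problem, not the sign of $\psi$); a complete write-up must either supply such an argument or split into cases as the paper does. A smaller inaccuracy: at the boundary of the stability condition it is the dHYM profile $H$ that degenerates at $E_\infty$ (producing finite H\"older regularity there), not the positivity of the momentum profile in the interior.
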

Theorem \ref{MainThmConic} is proved in Section \ref{MainThmConicSec}. Note that this gives a generalisation of Theorem \ref{MainThmSmooth}: when $\beta_0 = 1$ we recover precisely the smooth solutions provided by that result.
\begin{cor} For sufficiently small cone angle $2\pi \beta_0$ and sufficiently large $k' > 0$, the coupling constant $\alpha$ is positive.
\end{cor}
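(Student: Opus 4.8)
The plan is to reduce the sign of $\alpha$ to the sign of a single scalar factor and then exploit the fact that $x = k/(k+k') \to 0$ as $k' \to +\infty$. First I would observe that in the formula for $\alpha$ from Theorem~\ref{MainThmConic} the prefactor
\[
\frac{\sqrt{4 k_1^2 + (1 - k_1^2 + k_2^2)^2}}{2(1 + (k_1-k_2)^2)k_2^2}
\]
is strictly positive: the numerator is the square root of a sum of squares, and the denominator is positive because $k_2 \neq 0$ (as assumed in Remark~\ref{symmetryRmk}). Since moreover $x \in (0,1)$ is positive, the sign of $\alpha$ coincides with the sign of
\[
N(\beta_0, x) := 3 + x + s_{\Sigma} x^2 - 3(1+x)\beta_0,
\]
so that $\alpha > 0$ if and only if $\beta_0 < \frac{3 + x + s_{\Sigma} x^2}{3(1+x)}$.

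Second, I would track the dependence on $k'$ through $x = k/(k+k')$. For fixed base data $(\Sigma, \mathcal{L})$, hence fixed $k$ and $s_\Sigma$, we have $x \to 0^+$ as $k' \to +\infty$, and therefore the threshold satisfies $\frac{3 + x + s_\Sigma x^2}{3(1+x)} \to 1$. Consequently, for any cone angle with $\beta_0 < 1$ strictly there is some $x_0 \in (0,1)$, equivalently a lower bound on $k'$, such that $N(\beta_0, x) > 0$ whenever $x < x_0$; this yields $\alpha > 0$.

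Third, I would verify that the hypotheses of Theorem~\ref{MainThmConic} are met in this regime, so that a genuine solution with the stated value of $\alpha$ exists. The stability condition \eqref{stabilityRuledSurfaces} reads $1 + (k_1+k_2)^2 > x(1 + (k_1-k_2)^2)$; as $x \to 0$ its right-hand side tends to $0$ while its left-hand side is $\geq 1$, so it holds automatically for $k'$ large. The constraint $\beta_\infty \geq 1$ is equivalent to $\beta_0 \leq 1$, since multiplying $\beta_\infty = \frac{-2+\beta_0(1+x)}{-1+x}$ by the negative quantity $-1+x$ and reversing the inequality gives $\beta_0(1+x) \leq 1+x$. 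Thus choosing $\beta_0 < 1$, in particular any sufficiently small cone angle, together with $k'$ sufficiently large, simultaneously guarantees applicability of Theorem~\ref{MainThmConic} and positivity of $\alpha$.

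The computation is elementary, so I do not expect a serious obstacle; the only point requiring care is the potentially large negative contribution $s_\Sigma x^2$ when the genus $h$ is high, so that $s_\Sigma = 2(1-h)/k \ll 0$. This is harmless because $s_\Sigma$ is a fixed constant once the base is fixed, while the factor $x^2 \to 0$ as $k' \to \infty$ suppresses it; shrinking $\beta_0$ independently diminishes the only remaining negative term $-3(1+x)\beta_0$. The essential mechanism is simply that increasing $k'$ collapses $x$ to $0$ and pushes the admissible bound for $\beta_0$ up to $1$, in sharp contrast with the smooth case $\beta_0 = 1$, where $N/x = -2 + s_\Sigma x < 0$ forces $\alpha < 0$.
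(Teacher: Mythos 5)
Your proof is correct and follows exactly the argument implicit in the paper: the corollary is a direct sign analysis of the explicit formula for $\alpha$ in Theorem \ref{MainThmConic}, where the prefactor is manifestly positive, so positivity reduces to $3+x+s_{\Sigma}x^2-3(1+x)\beta_0>0$, which holds for $\beta_0<1$ once $x=k/(k+k')\to 0$. Your additional checks (that the stability condition \eqref{stabilityRuledSurfaces} holds automatically for large $k'$, and that $\beta_0\leq 1$ is equivalent to $\beta_\infty\geq 1$) are correct and make the statement's non-vacuity explicit.
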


\subsection{Relation to twisted KE metrics} As usual, under a suitable cohomological condition, the equation in \eqref{coupled_dHYM_Intro} involving the scalar curvature may be reduced to a condition involving the Ricci curvature. In our case, this condition is given by 
\begin{equation*}
[\operatorname{Ric}(\omega)] + \frac{\alpha}{2\sin\hth} [F] = \frac{\hat{s} - \alpha\hat{r}}{4} [\omega].    
\end{equation*} 
Then, the equation 
\begin{equation*}
s(\omega) - \alpha r_{\omega}(F) = \hat{s} - \alpha \hat{r}
\end{equation*}
reduces to the twisted K\"ahler-Einstein equation
\begin{equation}\label{twistedKE}
\operatorname{Ric}(\omega) + \frac{\alpha}{2\sin\hth}  F  = \frac{\hat{s} - \alpha\hat{r}}{4} \omega.
\end{equation}
We provide an explicit criterion for when this reduction occurs for the class of examples provided by Theorem \ref{MainThmConic} (in which case $\operatorname{Ric}(\omega)$, $F$ and $\omega$ extend to closed currents on $X$).  
\begin{prop}\label{tKEProp} The condition $s(\omega) - \alpha r_{\omega}(F) = \hat{s} - \alpha \hat{r}$ reduces to the twisted K\"ahler-Einstein equation \eqref{twistedKE} iff we have
\begin{align}\label{tKEPropCond}
\nonumber &(1 + k_1^2+ k_2^2) (x-1) \left(s_{\Sigma} x^2-3 \beta_0  (x+1)+x+3\right)\\&= 2 k_1 k_2 \left(-3
   \beta_0 +s_{\Sigma} x^3-x^2 (\beta_0 +s_{\Sigma}-1)+3\right).
\end{align}
Morevoer, there are infinitely many admissible values of  $k_1, k_2$, $k'$ which satisfy this equality for some $\beta_0$ and for which the ``stability condition" \eqref{stabilityRuledSurfaces} holds (so that the corresponding coupled equations are solvable).
\end{prop}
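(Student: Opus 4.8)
The plan is to read the ``iff'' as the standard $\partial\delbar$-lemma reduction and then to make the resulting cohomological identity explicit by intersection theory on $X$, the one subtle ingredient being the conical correction to the class of the Ricci current. First I would record the reduction criterion precisely. On the surface the dHYM equation \eqref{dHYMintro} gives $-2\,\omega\wedge F=r_{\omega}(F)\sin\hth\,\omega^{2}$, i.e. $\tr_{\omega}F=-r_{\omega}(F)\sin\hth$, so that contracting the $(1,1)$-equation \eqref{twistedKE} with $\omega$ reproduces exactly the scalar-curvature equation of \eqref{coupled_dHYM_Intro}. By Theorem \ref{MainThmConic} the forms $\omega$, $F$ and $\operatorname{Ric}(\omega)$ extend to closed real $(1,1)$-currents on $X$; two such currents with equal $\omega$-trace coincide as soon as they lie in the same class (by the $\partial\delbar$-lemma and the maximum principle), and conversely equal currents are cohomologous. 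Hence, granted the dHYM and scalar equations, the reduction to \eqref{twistedKE} holds iff $[\operatorname{Ric}(\omega)]+\tfrac{\alpha}{2\sin\hth}[F]=\tfrac{\hat s-\alpha\hat r}{4}[\omega]$ in $H^{1,1}(X;\R)$, and the proposition is the explicit form of this equality.

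Next I would compute in the basis $[E_{0}],[C]$, where $E_{0}^{2}=k$, $E_{0}\cdot C=1$, $C^{2}=0$ and $E_{\infty}=E_{0}-kC$. Adjunction on $E_{0}$ and on a fibre gives $c_{1}(X)=2[E_{0}]+k(s_{\Sigma}-1)[C]$ (using $s_{\Sigma}=2(1-h)/k$). The delicate point is that the Ricci current in \eqref{twistedKE} is the extension of the smooth Ricci form, whose class carries the conical corrections $-2\pi(1-\beta_{0})[E_{0}]-2\pi(1-\beta_{\infty})[E_{\infty}]$, so that
\[
\tfrac{1}{2\pi}[\operatorname{Ric}(\omega)]=(\beta_{0}+\beta_{\infty})[E_{0}]+k(s_{\Sigma}-\beta_{\infty})[C],
\]
and substituting $\beta_{\infty}=\frac{-2+\beta_{0}(1+x)}{-1+x}$ removes $\beta_{\infty}$ in favour of $\beta_{0}$ and $x$. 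Inserting this, the classes \eqref{CohClasses}, and the explicit values of $\alpha$, $\sin\hth$, $\hat s$, $\hat r$ (all obtained while proving Theorem \ref{MainThmConic}) turns the cohomological identity into a vector equation in a two-dimensional space. Pairing it with $[\omega]$ yields the cohomological average of $s(\omega)-\alpha r_{\omega}(F)-\hat s+\alpha\hat r$, which vanishes identically by the definitions of $\hat s$ and $\hat r$; therefore the identity is equivalent to the vanishing of its single remaining component, and a direct simplification of that one equation --- linear in $\beta_{0}$ --- produces exactly \eqref{tKEPropCond}. I expect the main obstacle to be precisely this bookkeeping: pinning down the conical correction to $[\operatorname{Ric}(\omega)]$ and then assembling $\alpha$, $\sin\hth$, $\hat s$, $\hat r$ as functions of $k_{1},k_{2},x,s_{\Sigma}$ and carrying the algebra through to the stated polynomial.

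Finally, for the existence clause I would solve \eqref{tKEPropCond}, which is linear in $\beta_{0}$, for $\beta_{0}$ as a rational function of $(k_{1},k_{2},x,s_{\Sigma})$, fix the base curve and $k'$ (hence $s_{\Sigma}$ and $x$), and let $(k_{1},k_{2})\to\infty$ along a ray $k_{2}/k_{1}\to t$. The limiting value $\beta_{0}^{\ast}(t,x,s_{\Sigma})$ is a ratio of quadratics in $t$, while the stability condition \eqref{stabilityRuledSurfaces} becomes, to leading order, $(1+t)^{2}>x(1-t)^{2}$. After checking that some admissible $t$ (with the corresponding base) gives $\beta_{0}^{\ast}\in(0,1)$ together with the strict inequality, continuity and the openness of \eqref{stabilityRuledSurfaces} furnish infinitely many integer triples $(k_{1},k_{2},k')$ with $k'>0$, $k_{1}<0$, $k_{2}\neq0$ for which $\beta_{0}\in(0,1]$ and \eqref{stabilityRuledSurfaces} holds, as claimed.
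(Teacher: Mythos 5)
Your proposal is correct and its skeleton coincides with the paper's: both reduce the question to the cohomological identity \eqref{CohomologyTwistedKE}, compute the class of the Ricci current of the conical metric, observe that only one scalar condition survives, and then turn it into \eqref{tKEPropCond}. Within that skeleton you make three genuinely different choices. First, you get $[\operatorname{Ric}(\omega)]$ by adjunction plus the standard conical correction $-2\pi(1-\beta_0)[E_0]-2\pi(1-\beta_\infty)[E_\infty]$, whereas the paper (Lemma \ref{ConicalRicciLemma}) computes the Ricci form inside the momentum construction and integrates over $E_0$ and $C$; the answers agree, since $2(1-h)=ks_{\Sigma}$. Second, you kill one of the two components of the vector equation by pairing with $[\omega]$, noting that this pairing is the integral of the scalar equation; the paper instead writes both components \eqref{SystemTwistedKE} and checks they coincide through the relation $\beta_\infty=\frac{-2+\beta_0(1+x)}{-1+x}$, i.e.\ \eqref{CompatibilitySystem}. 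This is the same mechanism, and your observation in fact explains why the paper's compatibility identity must hold; but your phrasing ``vanishes identically by the definitions of $\hat s$ and $\hat r$'' should be sharpened: the vanishing uses that the pair actually solves the coupled system \eqref{coupled_dHYM_Intro} (integrate the scalar equation over $X$; and $\hat r$ coincides with the average of $r_{\omega}(F)$ only because the dHYM equation holds), not the definitions alone. Third, for the ``moreover'' clause the paper fixes $k_2<0$, so that \eqref{stabilityRuledSurfaces} is automatic, rewrites the condition as $F(k_1,k_2)=H(k,k',h,\beta_0)$ with $F(k_1,k_2)>2$, and shows that $H(k,k',h,\cdot)$ sweeps out $(2,\infty)$ on an interval inside $(0,1)$ once $k'$ is large; you instead solve \eqref{tKEPropCond}, which is affine in $\beta_0$, and pass to the limit along rays $k_2=tk_1$ with $k'$ fixed.

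The only real soft spot is that your existence argument defers the step carrying all the content: ``after checking that some admissible $t$ gives $\beta_0^{\ast}\in(0,1)$.'' That check must be done explicitly, and it does succeed: solving \eqref{tKEPropCond} for $\beta_0$ and letting $k_2=tk_1$, $k_1\to-\infty$, one finds at $t=1$ (so $k_2=k_1<0$, where stability is automatic)
\begin{equation*}
\beta_0^{\ast}=\frac{3-x}{3-x^{2}}\in(0,1)\quad\text{for all }x\in(0,1),
\end{equation*}
independently of $s_{\Sigma}$; continuity then yields infinitely many admissible integer triples, as you say. With that verification inserted, your argument is complete and is a legitimate alternative to the paper's range analysis of $H(k,k',h,\beta)$.
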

This result is proved in Section 6. Writing the dHYM equation on the surface $X$ in Monge-Amp\`ere form (as in \cite{collinsJacobYau}) we see that in the twisted K\"ahler-Einstein case the coupled equations \eqref{coupled_dHYM_Intro} become
\begin{align*} 
\begin{cases}
\left(- \sin(\hth ) F + \cos(\hth ) \omega\right)^2 = \omega^2\,\\
\operatorname{Ric}(\omega) + \frac{\alpha}{2\sin\hth}  F  = \frac{\hat{s} - \alpha\hat{r}}{4} \omega, 
\end{cases} 
\end{align*}
and so they are closely related to the systems of coupled Monge-Amp\`ere equations studied by Hultgren and Wytt-Nystr\"om \cite{wytt}.
\subsection{Realisation as B-branes} Given the origin of the dHYM equation in mirror symmetry, it seems interesting to ask whether the special dHYM connections appearing in Theorem \ref{MainThmSmooth}, i.e. solutions of the coupled equations \eqref{coupled_dHYM_Intro}, can in fact be realised as B-branes (i.e. for our purposes, holomorphic submanifolds endowed with a dHYM connection) in some ambient Calabi-Yau manifold (this is how the dHYM equation appears in mathematical physics, see e.g. \cite{collinsXie}). Thus we are asking for a Calabi-Yau manifold $\check{M}$ with a Ricci flat K\"ahler metric $\omega_{\check{M}}$, and a holomorphic embedding $\iota\!:X \hookrightarrow\check{M}$, such that the K\"ahler form $\omega$ constructed in Theorem \ref{coupled_dHYM_Intro} is given by the restriction $\omega =  \iota^*\omega_{\check{M}}$. We show that this can be achieved at least locally around $X$, relying on the classical results on Feix \cite{feix} on the hyperk\"ahler extension of real analytic K\"ahler metrics.
\begin{prop}\label{HKextensionProp} The K\"ahler form $\omega$ and curvature form $F$ provided by Theorem \ref{MainThmSmooth} are real analytic. Thus, $\omega$ extends to a hyperk\"ahler metric defined on an open neighbourhood of the zero section in the holomorphic cotangent bundle $T^*X$, and $F$ extends to the curvature form of a hyperholomorphic line bundle defined on the same open neighbourhood.  
\end{prop}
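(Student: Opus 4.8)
The plan is to reduce everything to the real analyticity of $\omega$ and $F$: once this is known, the hyperk\"ahler statement is exactly the output of Feix's theorem \cite{feix}, which canonically extends a real analytic K\"ahler manifold to a hyperk\"ahler metric on a neighbourhood of the zero section of its holomorphic cotangent bundle, while the line bundle extends by the same complexification procedure. Thus the real content is the analyticity of the tensors produced by the momentum construction \eqref{ansatzMetric}, \eqref{ansatzCurvature}, and I would organise its proof by separating the base and fibre directions.

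On the base $\Sigma$ the metric $g_{\Sigma}$ has constant scalar, hence constant Gauss, curvature, so it is locally isometric to one of the model surfaces $S^2$, $\R^2$, $\mathbb{H}^2$ and is therefore real analytic, as is its Ricci form $\rho_{\Sigma}$; equivalently one may invoke analytic elliptic regularity in a fixed analytic atlas on $\Sigma$. In the fibre direction, the ansatz expresses $\omega$ and $F$ through profile functions of the momentum variable $\tau$, and, as in the proof of Theorem \ref{MainThmSmooth}, the coupled system \eqref{coupled_dHYM_Intro} becomes a set of ODEs and algebraic relations whose terms are analytic in $\tau$ and in the profiles. The classical analytic theory of ODEs then yields that the profiles, and hence $\omega$ and $F$, are real analytic on the complement of the two sections $E_0$ and $E_{\infty}$, where $\tau$ ranges in the interior of the momentum interval.

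The step I expect to be the main obstacle is analyticity across $E_0$ and $E_{\infty}$, where the momentum coordinate degenerates and one must re-express the profiles in terms of a genuine holomorphic fibre coordinate $w$. The momentum construction already imposes the boundary conditions on the profiles that make $\omega$ and $F$ close up to \emph{smooth} tensors on $X$; the task is to upgrade smoothness to analyticity. I would do this by showing that near $E_0$ the momentum variable is an analytic function of $s = |w|^2$ vanishing to first order, and that the profiles, viewed as functions of $s$, admit convergent expansions with the correct behaviour at $s = 0$, so that the Hermitian components of $\omega$ and $F$ are analytic functions of $(w, \bar{w})$; the analysis at $E_{\infty}$ is identical after the standard change of chart. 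Here the explicit form of the solution in Theorem \ref{MainThmSmooth} is what makes the required boundary expansions checkable.

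Granting real analyticity of $\omega$, Feix's theorem \cite{feix} produces the hyperk\"ahler extension on a neighbourhood of the zero section of $T^*X$ restricting to $\omega$ along $X$, with its family of complex structures and the canonical holomorphic symplectic form $\omega_J + \ii\,\omega_K$. For the line bundle, $F$ is a real analytic closed $(1,1)$-form, so the Chern connection of $(L,h)$ is real analytic and extends, within the same complexification, to a connection on a holomorphic extension of $L$. To see that the extension is hyperholomorphic, i.e. that its curvature is of type $(1,1)$ for every complex structure of the family, one uses that near the zero section the forms $\omega_J$ and $\omega_K$ are exact, so that the cohomological obstruction to hyperholomorphicity vanishes and the $SU(2)$-invariance built into the twistorial description of Feix's construction makes the extended curvature $SU(2)$-invariant; this is the only point requiring verification for the bundle statement.
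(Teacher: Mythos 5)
Your proposal follows essentially the same route as the paper's own proof: real analyticity of the base data ($\omega_{\Sigma}$, and hence the fibre metric $h$) together with analyticity of the momentum profiles --- which the paper reads off from the explicit solution \eqref{ODEscalarGenSol} and the ODE $f''(s)=\phi(f'(s))$, and you obtain from the analytic theory of ODEs --- followed by an appeal to Feix \cite{feix} for the hyperk\"ahler and hyperholomorphic extension statements. If anything you are more careful than the paper on the two points it glosses over, namely analyticity of the extension across $E_0$ and $E_{\infty}$ (where the momentum coordinate degenerates) and the hyperholomorphicity of the extended line bundle, so the proposal is correct and essentially coincides with, indeed slightly refines, the published argument.
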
  
This result is proved in Section \ref{MainThmSmoothSec}.
\subsection{Large and small radius limits} In the mathematical physics literature (see e.g. \cite{branes}, Chapter 1), the dHYM equation involves a ``slope" parameter $\alpha' > 0$ (related to the ``string length" by $\alpha' = l^2_s$), which appears simply as a scale parameter for the curvature form, $F \mapsto \alpha' F$. The corresponding coupled equations \eqref{coupled_dHYM_Intro} are given by
\begin{align}\label{coupled_dHYM_Intro_scale}
\begin{cases}
\Theta_{\omega}(\alpha' F) = \hth \mod 2\pi\,\\
s(\omega) - \alpha r_{\omega}(\alpha' F) = \hat{s} - \alpha \hat{r}. 
\end{cases} 
\end{align}
The expressions ``large radius limit" (or ``zero slope limit") refer to the behaviour of the dHYM equations and their solutions as $\alpha' \to 0$.  As explained in \cite{us}, the large radius limit of our coupled equations is the (rank $1$ case of) the K\"ahler-Yang-Mills system introduced by \'Alvarez-C\'onsul, Garcia-Fernandez and Garc\'ia-Prada \cite{AGG}. We can prove a much stronger result, at the level of solutions, on the ruled surface $X$. 
\begin{thm}\label{LargeRadiusThm} For all sufficiently small $\alpha'$, depending only on the fixed parameters $k_1, k_2$, $k'$, (i.e. on the fixed cohomology classes $[\omega]$, $[F]$), the coupled equations \eqref{coupled_dHYM_Intro_scale} are uniquely solvable on $X$ with the momentum construction. Moreover, as $\alpha'\to 0$, the corresponding solutions $\omega_{\alpha'}$, $F_{\alpha'}$ converge smoothly to a solution of the K\"ahler-Yang-Mills system
\begin{align}\label{KYM}
\begin{cases}
\Lambda_{\omega} F = \mu\\
s(\omega) + \tilde{\alpha}\,\Lambda^2_{\omega}(F\wedge F) = c
\end{cases} 
\end{align}   
for some (explicit) coupling constant $\tilde{\alpha}$.
\end{thm}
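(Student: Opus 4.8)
The plan is to reduce the scaled problem \eqref{coupled_dHYM_Intro_scale} to the already solved problem of Theorem \ref{MainThmSmooth} and then to extract the limit by a Taylor expansion in $\alpha'$. The crucial observation is that, since the class $[\omega] = 2\pi[2E_0 + k'C]$ does not involve $k_1, k_2$, rescaling the curvature $F \mapsto \alpha' F$ is exactly the same as rescaling $(k_1, k_2) \mapsto (\alpha' k_1, \alpha' k_2)$ while keeping $k'$ (hence $x$) fixed; indeed by \eqref{CohClasses} one has $\alpha'[F] = 2\pi[2\alpha'(k_1-k_2)E_0 + \alpha'(2kk_2 + k'(k_1+k_2))C]$. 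Writing $\tilde F = \alpha' F$, the system \eqref{coupled_dHYM_Intro_scale} for $(\omega, F)$ becomes precisely the unscaled system \eqref{coupled_dHYM_Intro} for $(\omega, \tilde F)$, with the same coupling constant $\alpha$ and cohomological parameters $(\alpha' k_1, \alpha' k_2, k')$. Thus Theorem \ref{MainThmSmooth} applies verbatim, provided its stability condition \eqref{stabilityRuledSurfaces} holds for the rescaled parameters, i.e.
\[
1 + (\alpha')^2(k_1+k_2)^2 > x\left(1 + (\alpha')^2(k_1-k_2)^2\right).
\]
As $\alpha' \to 0$ this reduces to $1 > x$, which holds since $x \in (0,1)$; hence the inequality is satisfied for all sufficiently small $\alpha'$, and we obtain unique solvability with the momentum construction, together with a uniquely determined coupling constant $\alpha = \alpha(\alpha')$.

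Next I would read off the asymptotics of $\alpha(\alpha')$ from its explicit formula in Theorem \ref{MainThmSmooth} with $k_i \mapsto \alpha' k_i$: the numerator tends to $1$, while the denominator equals $2(1 + (\alpha')^2(k_1-k_2)^2)(\alpha')^2 k_2^2$, so that
\[
\alpha(\alpha') = \frac{s_{\Sigma}x - 2}{2k_2^2}\,(\alpha')^{-2} + O(1), \qquad \alpha'\to 0.
\]
This blow-up is exactly what is needed: the natural object with a finite limit is the rescaled coupling $\tilde\alpha := \tfrac12(\alpha')^2\alpha(\alpha') \to \tfrac{s_{\Sigma}x-2}{4k_2^2}$, which will be the coupling constant of \eqref{KYM}. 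To identify the limit equations I would Taylor expand in the small quantities $\alpha'\lambda_i$, where $\lambda_i$ are the eigenvalues of $\omega^{-1}F$: since $\Theta_{\omega}$ is odd and $r_{\omega}$ even in $F$,
\[
\Theta_{\omega}(\alpha' F) = -\alpha'\,\Lambda_{\omega}F + O((\alpha')^3), \qquad r_{\omega}(\alpha' F) = 1 + \tfrac{(\alpha')^2}{2}\textstyle\sum_i \lambda_i^2 + O((\alpha')^4),
\]
and correspondingly $\hth = O(\alpha')$ and $\hat{r} = 1 + O((\alpha')^2)$ from the cohomological formulas. Dividing the phase equation of \eqref{coupled_dHYM_Intro_scale} by $\alpha'$ and letting $\alpha'\to 0$ yields $\Lambda_{\omega}F = \mu$, the first equation of \eqref{KYM}, with $\mu$ the topological average of $\Lambda_{\omega}F$. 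For the scalar equation I would subtract averages, write $s(\omega) - \hat{s} = \alpha(r_{\omega}(\alpha'F) - \hat{r})$, insert the expansion of $r_{\omega}$, and use the pointwise identity $\sum_i\lambda_i^2 = (\Lambda_{\omega}F)^2 - \Lambda^2_{\omega}(F\wedge F)$ together with $\Lambda_{\omega}F \equiv \mu$ (so that $(\Lambda_{\omega}F)^2$ is constant and is absorbed into the average); passing to the limit with $\tfrac12(\alpha')^2\alpha \to \tilde\alpha$ then produces exactly $s(\omega) + \tilde\alpha\,\Lambda^2_{\omega}(F\wedge F) = c$, the second equation of \eqref{KYM}.

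Finally I would promote this formal limit to genuine smooth convergence. Because $[\omega]$ is fixed, the momentum construction takes place on a fixed momentum interval with fixed Calabi boundary conditions at $E_0$ and $E_{\infty}$, so the whole family is described by profile functions solving an explicit ODE system on this interval whose coefficients depend smoothly on the pair $(\alpha', \tilde\alpha)$. Rewriting the system in terms of $\tilde\alpha$ rather than $\alpha$ removes the blow-up, and at $\alpha' = 0$ it becomes precisely the momentum-construction form of the K\"ahler-Yang-Mills system \eqref{KYM}, whose solvability under $x<1$ recovers the solution of Keller and T{\o}nnesen-Friedman. Smooth dependence of solutions of such boundary value problems on the parameter $(\alpha', \tilde\alpha)$, combined with uniform control of the profiles up to the endpoints, then gives $C^{\infty}$ convergence of $\omega_{\alpha'}, F_{\alpha'}$ to the limiting solution. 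I expect the main obstacle to lie exactly here: one must verify that the reparametrised ODE system extends smoothly and nondegenerately across $\alpha' = 0$, and that the endpoint behaviour forced by the fixed cohomology is uniform in $\alpha'$, so that convergence of the profiles upgrades to smooth convergence of the geometric data on all of $X$ rather than merely on compact subsets of $X\setminus(E_0\cup E_{\infty})$.
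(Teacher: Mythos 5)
Your proposal follows essentially the same route as the paper for the existence statement and for identifying the limit system: the rescaling $(k_1,k_2)\mapsto(\alpha'k_1,\alpha'k_2)$ reducing \eqref{coupled_dHYM_Intro_scale} to Theorem \ref{MainThmSmooth}, the observation that the stability condition \eqref{stabilityRuledSurfaces} degenerates to $1>x$ and hence holds for all small $\alpha'$, the asymptotics $\alpha(\alpha')\sim \tilde{\alpha}\,(\alpha')^{-2}$ read off the explicit formula, and the expansions of $\Theta_{\omega}$, $r_{\omega}$, $\hth$, $\hat{r}$ leading to \eqref{KYM} are all exactly the paper's steps (your extra factor $\tfrac12$ in $\tilde{\alpha}$ is only a different normalization of $\Lambda^2_{\omega}$, immaterial since the theorem asserts ``some explicit constant''). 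The genuine difference is in how smooth convergence is established, and this is precisely the step you defer. The paper does not invoke any abstract smooth-dependence theorem for boundary value problems; it exploits the fact that all solutions are in closed form. Concretely: the dHYM solution is \eqref{signSolutions} with rescaled parameters, and the paper proves the expansion \eqref{Hexpansion}, namely $H_{\alpha'}(t)=\alpha'\left(k_1t+\frac{k_2}{t}\left(\frac{1}{x^2}-1\right)\right)+(\alpha')^2R(\alpha',t)$ with $R$ smooth up to $\alpha'=0$, uniformly on the closed momentum interval --- here strict stability keeps $t^2+C'_{\alpha'}$ bounded away from zero up to the endpoints for small $\alpha'$, so the endpoint-uniformity issue you worry about does not arise. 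Inserting this and the explicit formulae \eqref{explicitAlpha} into \eqref{profileExpansion} shows that its right-hand side (which does not involve the unknown profile) extends smoothly across $\alpha'=0$, so two integrations give smooth convergence of the profiles, with positivity uniform by Remark \ref{unifPosRmk}; finally the limit of $F_{\alpha'}$ is identified explicitly as $F_0=c_1\omega+c_2\beta$, which solves $\Lambda_{\omega}F=\mu$ because $\beta$ is $\omega$-traceless. One point you should make explicit if you run your scheme: the unrescaled dHYM ODE \eqref{HdHYM} degenerates as $\alpha'\to 0$ (its relevant solution $H_{\alpha'}$ tends to zero), so the ``momentum-construction form of the K\"ahler--Yang--Mills system at $\alpha'=0$'' only appears after rescaling the curvature data by $(\alpha')^{-1}$, i.e.\ working with $(\alpha')^{-1}H_{\alpha'}$ and the potentials $(\alpha')^{-1}g_{\alpha'}$; this is exactly the nondegeneracy you flag as the main obstacle, and the expansion \eqref{Hexpansion} is what resolves it. So your plan is correct and would succeed, but as written the convergence half of the theorem is reduced to a correctly identified, unexecuted verification, which in the paper constitutes the actual proof and is done by direct computation.
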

The particular solutions of the K\"ahler-Yang-Mills system obtained in this limit are due to Keller and T{\o}nnesen-Friedman \cite{keller}. 

Similarly, the ``small radius limit" (or ``infinite slope limit") concerns the behaviour of the coupled equations \eqref{coupled_dHYM_Intro_scale} as $\alpha' \to \infty$. 
\begin{thm}\label{SmallRadiusThm} Fix parameters $k_1, k_2$, $k'$ (i.e. cohomology classes $[\omega]$, $[F]$) such that the ``stability condition"
\begin{equation*}
 ( k_1 + k_2 )^2 > x  ( k_1 -k_2 )^2 
\end{equation*}
holds. Then the coupled equations \eqref{coupled_dHYM_Intro_scale} are uniquely solvable on $X$ with the momentum construction, for all $\alpha'>0$. Moreover, as $\alpha'\to \infty$, the corresponding solutions $\omega_{\alpha'}$, $F_{\alpha'}$ converge smoothly to a solution of the system
\begin{align*} 
\begin{cases}
F \wedge \omega = c_1 F^2\\
s(\omega) - \hat{\alpha}\,\Lambda_{\omega} F = c_2. 
\end{cases} 
\end{align*}
for some (explicit) coupling constant $\hat{\alpha}$.
\end{thm}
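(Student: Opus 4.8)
The plan is to run the momentum-construction reduction already used for Theorem \ref{MainThmSmooth}, now carrying the slope parameter $\alpha'$ explicitly, and then to pass to the limit $\alpha'\to\infty$ at the level of the resulting ordinary differential equations. First I would recall that under the ansatz \eqref{ansatzMetric}, \eqref{ansatzCurvature} both $\omega$ and $F$ are $S^1$-invariant and are encoded by profile functions of the momentum variable $\TT$ on a fixed interval determined by $x$. In these coordinates the dHYM equation \eqref{dHYMintro} integrates explicitly and expresses the curvature profile algebraically in terms of the metric profile, while the second equation in \eqref{coupled_dHYM_Intro_scale} becomes a second-order ODE for the metric profile, solvable by quadrature; the coupling constant $\alpha$ and the phase $\hth$ are fixed by the cohomological and boundary data, and the requirement that $\omega$ be a genuine K\"ahler metric is the positivity of the momentum profile on the open interval. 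I would rewrite this reduced system with $\alpha'$ present as a parameter, so that after passing to the variable $\eps = 1/\alpha'$ every coefficient extends smoothly (indeed real-analytically) to $\eps = 0$.

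For the solvability statement I must show that the reduced problem admits a unique solution for every $\alpha' > 0$. As in Theorem \ref{MainThmSmooth}, this comes down to positivity of the momentum profile together with the endpoint conditions guaranteeing smooth closure across $E_0$ and $E_{\infty}$, which in turn fix the unique admissible coupling $\alpha = \alpha(\alpha')$. The point is that the relevant positivity inequality, evaluated at the two sections, takes the form $1 + {\alpha'}^2(k_1+k_2)^2 > x(1 + {\alpha'}^2(k_1-k_2)^2)$; since $x < 1$ this holds for all $\alpha' > 0$ exactly when the homogeneous ``stability condition'' $(k_1+k_2)^2 > x(k_1-k_2)^2$ holds, which explains both the hypothesis and the disappearance of the ``$1+$'' present in \eqref{stabilityRuledSurfaces}. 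This also provides uniform positivity of the profile as $\alpha'\to\infty$.

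Next I would identify the rescaling that makes the limit nondegenerate. Writing the dHYM equation as $\im\lp e^{-\ii\hth}(\omega - \ii\alpha' F)^2\rp = 0$, i.e.
\begin{equation*}
\sin\hth\,(\omega^2 - {\alpha'}^2 F^2) + 2\alpha'\cos\hth\,\omega\wedge F = 0,
\end{equation*}
and using the cohomological identity $\tan\hth = 2\alpha' b/(a - {\alpha'}^2 c)$ with $a = \int_X \omega^2$, $b = \int_X \omega\wedge F$ and $c = \int_X F^2$, one divides by the factor $2\alpha'\cos\hth$ (nonzero for large $\alpha'$) and lets $\alpha'\to\infty$ to obtain the first limit equation $F\wedge\omega = c_1 F^2$ with $c_1 = -b/c$. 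For the second equation I would use $r_{\omega}(\alpha' F) = \sqrt{(1 - {\alpha'}^2 P)^2 + 4{\alpha'}^2 Q^2}$, where $P = F^2/\omega^2$ and $Q = \tfrac{1}{2}\Lambda_{\omega} F$, together with the analogous expansion of $\hat r$. Since $P$ has constant sign on $X$, the leading terms ${\alpha'}^2|P|$ of $r_{\omega}(\alpha' F)$ and ${\alpha'}^2\overline{|P|}$ of $\hat r$ combine after subtraction; choosing the admissible coupling to scale as $\alpha(\alpha')\sim\hat{\alpha}'/{\alpha'}^2$ and using the relation $Q = c_1 P$ enforced in the limit, the term $\alpha\,(r_{\omega}(\alpha' F) - \hat r)$ converges to $\hat{\alpha}\,\Lambda_{\omega} F$ up to an additive constant. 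This yields the second limit equation $s(\omega) - \hat{\alpha}\,\Lambda_{\omega} F = c_2$ with explicit $\hat{\alpha}$ and $c_2$.

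Finally, since the entire problem is reduced to an ODE boundary value problem on a fixed $\TT$-interval with data depending smoothly on $\eps = 1/\alpha'$, the smooth convergence $\omega_{\alpha'}, F_{\alpha'}\to(\omega, F)$ follows from smooth dependence of the quadrature solution on $\eps$, provided the uniform positivity from the second paragraph holds up to $\eps = 0$. The main obstacle is precisely this: the phase $\hth$ and the radius $r_{\omega}(\alpha' F)$ both degenerate as $\alpha'\to\infty$, and one must simultaneously (i) rescale $\alpha$ and exploit the constant sign of $F^2/\omega^2$ to extract a nondegenerate finite limit, and (ii) control the rescaled profiles and their derivatives uniformly up to the sections $E_0, E_{\infty}$, where the profile vanishes and the metric closes up, so that both the interior positivity and the endpoint smoothness conditions survive the limit and the limiting profile still defines an honest solution on all of $X$.
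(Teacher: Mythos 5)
Your proposal is correct and follows essentially the same route as the paper: reduce to the momentum-construction ODEs with rescaled classes $(\alpha'k_1,\alpha'k_2)$, observe that the homogeneous ``stability condition'' (together with $x<1$) yields solvability for every $\alpha'>0$ and uniform positivity of the profile, and then extract the limit equations by rescaling the coupling constant like $1/(\alpha')^2$ and expanding the phase and radius as $\alpha'\to\infty$. The paper carries out your final step (uniform control up to the endpoints and smooth convergence) via the explicit formula for $H_{\alpha'}(t)$ and its expansion in $(\alpha')^{-1}$, which is exactly the smooth dependence on $\eps=1/\alpha'$ that you invoke.
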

At least in the case when $F$ is K\"ahler, this system couples the $J$-equation $\Lambda_{F} \omega = c'_1$ for $F$ to a twisted cscK equation for $\omega$. In general, these limiting equations belong to a class of coupled PDEs studied by Datar and Pingali \cite{datar}.

Theorems \ref{LargeRadiusThm} and \ref{SmallRadiusThm} are proved in Section \ref{LargeRadiusSec}.
\subsection{Complexified K\"ahler classes} Complexified K\"ahler classes are expressions of the form 
$[\omega + \ii B]$, where $\omega$ is a K\"ahler form and $[B] \in H^2(M, \R)/H^2(M,\Z)$ is known as the B-field. They play an important role in mirror symmetry (see e.g. \cite{richardSurvey} Section 2). Let $M$ be a compact K\"ahler manifold with no holomorphic $2$-forms. Collins and Yau (\cite{collinsYau} Section 8) consider a dHYM equation on $M$ of the form
\begin{equation*}
\Theta_{\omega}(F + B) = \hth \mod 2\pi,
\end{equation*} 
where $\ii F$ is the unknown curvature form of a Hermitian holomorphic line bundle $L \to M$ and $B$ is a fixed representative of a (lift of a) B-field. Arguing from mirror symmetry, they propose that the existence of a solution $F$ should be related, conjecturally, to the a suitable notion of stability of the object $L$ with respect to the complexified K\"ahler class $[\omega + \ii B]$.  

In the special case then $L$ is the trivial bundle $\mathcal{O}_M$, the equation becomes 
\begin{equation*}
\Theta_{\omega}(B + \ii \del\delbar u) = \hth \mod 2\pi,
\end{equation*} 
so we are effectively trying to find a canonical representative of the B-field $[B]$ with respect to a background K\"ahler form $\omega$; the existence of such a representative should be related to the stability of the object $\mathcal{O}_M$ with respect to $[\omega + \ii B]$.

Our coupled equations 
\begin{align}\label{coupled_dHYM_Intro_Bfield}
\begin{cases}
\Theta_{\omega}(B) = \hth \mod 2\pi\,\\
s(\omega) - \alpha r_{\omega}(B) = \hat{s} - \alpha \hat{r}, 
\end{cases} 
\end{align}
with $[B]$ a (lift of a) class in $H^2(M, \R)/H^2(M,\Z)$, can then be thought of as trying to prescribe a canonical representative of the complexified K\"ahler class $[\omega + \ii B]$. Note that in the Calabi-Yau case, at zero coupling $\alpha = 0$ and in the large radius limit, these equations for the complex form $\omega + \ii B$ reduce to the conditions 
\begin{align*} 
\begin{cases}
\Delta_{\omega} B = 0\\
\operatorname{Ric}(\omega) = 0, 
\end{cases} 
\end{align*}
which are standard in the physics literature (see e.g. \cite{branes} Section 1.1).\\

As an example we shall discuss the existence of such a canonical representative for the complexified K\"ahler class
\begin{equation*}
[\omega  + \ii B] = 2\pi (2 E_0 + (k' + \ii k'') C) 
\end{equation*}
on our ruled surfaces $X$, where the K\"ahler condition is equivalent to $k' > 0$. The key observation is that this can be expressed in the form
\begin{align*} 
[\omega] &= 2\pi[ 2 E_0 + k' C],\\
[ B ] &= 2\pi[2 (k_1 - k_2)  E_0  + (2 k k_2 + k' (k_1  + k_2 )  C] 
\end{align*} 
with the special choices 
\begin{equation*}
k_1 = k_2 = \frac{k''}{2(k + k')},
\end{equation*}
provided we have $k'' < 0$. Thus we may apply Theorem \ref{MainThmSmooth} (and, more generally, Theorem \ref{MainThmConic} in the case of conical singularities) to show that the coupled equations \eqref{coupled_dHYM_Intro_Bfield} are solvable, uniquely under the momentum construction, iff the ``stability condition"
\begin{equation*}
( 1 + ( k_1 + k_2 )^2 )  = 1 + \left(\frac{k''}{k + k'}\right)^2 > x ( 1 + ( k_1 -k_2 )^2 ) = \frac{k}{k+k'} 
\end{equation*}
holds. But, clearly, this is automatically satisfied. By Remark \ref{symmetryRmk}, the same argument works for the case $k'' > 0$. 
\begin{cor} The complexified K\"ahler class 
\begin{equation*}
[\omega + \ii B] = 2\pi (2 E_0 + (k' + \ii k'') C),
\end{equation*}
where $k' > 0$, $k'' \neq 0 $, admits a canonical representative. This also holds allowing conical singularities; the corresponding coupling constant is given by
\begin{equation*}
\alpha = \frac{2 \sqrt{(k+k')^2+(k'')^2} \left(k^2 (-6 \beta_0 +s_{\Sigma}+4)+(7-9 \beta_0 ) k
   k'-3 (\beta_0 -1) (k')^2\right)}{k (k'')^2}.
\end{equation*}
\end{cor}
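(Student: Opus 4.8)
The plan is to reduce the statement directly to Theorem \ref{MainThmConic} (with Theorem \ref{MainThmSmooth} as its $\beta_0 = 1$ specialisation) via the parameter identification recorded in the preceding discussion. First I would observe that the complexified class $[\omega + \ii B] = 2\pi(2 E_0 + (k' + \ii k'')C)$ splits as $[\omega] = 2\pi[2 E_0 + k' C]$ together with $[B] = 2\pi[k'' C]$, and that this is precisely the shape of the classes in \eqref{CohClasses} once one sets $k_1 = k_2 = \frac{k''}{2(k+k')}$: with $k_1 = k_2$ the coefficient $2(k_1 - k_2)$ of $E_0$ in $[F]$ vanishes, while the coefficient of $C$ becomes $2 k k_2 + k'(k_1 + k_2) = 2(k+k')k_2 = k''$. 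The key point to emphasise is that in the B-field setting $B$ is merely a fixed real representative of a class in $H^2(X,\R)/H^2(X,\Z)$, so no integrality of $k_1, k_2$ is needed; the momentum construction \eqref{ansatzMetric}, \eqref{ansatzCurvature} and the proofs of Theorems \ref{MainThmSmooth}, \ref{MainThmConic} use only that $\omega, F$ are the real forms it produces, and these go through verbatim with $F$ replaced by $B$.

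Next I would verify that the stability condition \eqref{stabilityRuledSurfaces} holds automatically. With $k_1 = k_2$ one has $(k_1 - k_2)^2 = 0$ and $(k_1 + k_2)^2 = (k''/(k+k'))^2 \geq 0$, so the left-hand side is $\geq 1$ while the right-hand side equals $x = k/(k+k') < 1$; the inequality is therefore strict, and the hypotheses of Theorem \ref{MainThmConic} are satisfied for every admissible cone angle $2\pi\beta_0$. Assuming first $k'' < 0$, so that $k_1 = k_2 < 0$ as in the sign convention of Remark \ref{symmetryRmk}, Theorem \ref{MainThmConic} produces the unique momentum-construction solution of \eqref{coupled_dHYM_Intro_Bfield}, i.e. the desired canonical representative, with the smooth case $\beta_0 = 1$ recovering Theorem \ref{MainThmSmooth}. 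For $k'' > 0$ I would invoke the symmetry $k_i \mapsto -k_i$ of Remark \ref{symmetryRmk}, which maps the solvable parameter set to itself and leaves $\alpha$ unchanged, since $\alpha$ depends on $k_1, k_2$ only through $k_1^2$, $k_2^2$ and $(k_1 - k_2)^2$.

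Finally I would obtain the explicit coupling constant by substituting $k_1 = k_2 = \frac{k''}{2(k+k')}$ and $x = \frac{k}{k+k'}$ into the formula of Theorem \ref{MainThmConic}. The prefactor simplifies cleanly, since $1 - k_1^2 + k_2^2 = 1$ and $1 + (k_1 - k_2)^2 = 1$, whence $\sqrt{4 k_1^2 + (1 - k_1^2 + k_2^2)^2} = \sqrt{1 + (k''/(k+k'))^2}$ and the full prefactor collapses to $\frac{2(k+k')\sqrt{(k+k')^2 + (k'')^2}}{(k'')^2}$. Multiplying by $\frac{1}{x}\bigl(3 + x + s_{\Sigma} x^2 - 3(1+x)\beta_0\bigr)$, rewriting everything over $k(k+k')$, and cancelling one factor of $(k+k')$ yields the stated expression; the only genuine computation is expanding $(4k + 3k')(k+k') + s_{\Sigma} k^2 - 3\beta_0(2k + k')(k+k')$ into the quadratic $k^2(-6\beta_0 + s_{\Sigma} + 4) + (7 - 9\beta_0)k k' - 3(\beta_0 - 1)(k')^2$. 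I expect no conceptual obstacle: the whole argument is a specialisation followed by an algebraic simplification, and the only step demanding care is the observation above that the integrality hypothesis used elsewhere to realise $F$ as $-2\pi c_1(L)$ plays no role once $B$ is treated as a B-field.
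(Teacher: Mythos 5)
Your proposal is correct and takes essentially the same approach as the paper: the identification $k_1 = k_2 = \frac{k''}{2(k+k')}$, the observation that the ``stability condition'' \eqref{stabilityRuledSurfaces} holds automatically since the left-hand side is at least $1 > x$, the reduction to Theorems \ref{MainThmSmooth} and \ref{MainThmConic} for $k'' < 0$ together with the symmetry of Remark \ref{symmetryRmk} for $k'' > 0$, and the substitution into the coupling-constant formula of Theorem \ref{MainThmConic}. Your explicit algebra for $\alpha$ (which the paper states without derivation) checks out, as does your remark that integrality of $k_1, k_2$ is not required once $B$ is treated as a B-field rather than the curvature of a line bundle.
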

Note that a canonical representative with vanishing B-field $B = 0$ would correspond to a cscK metric, which does not exist. The coupling constant $\alpha$ diverges as $k'' \to 0$. It seems interesting that a nontrivial B-field can stabilise the unstable ruled surface $X$.\\

\noindent \textbf{Plan of the paper.} In Section \ref{momentumSec} we set up the momentum construction on our ruled surfaces. Section \ref{dHYMSec} solves the dHYM equation on our ruled surfaces explicitly using the momentum construction, under the necessary ``stability condition" \eqref{stabilityRuledSurfaces}. This result is applied in Section \ref{MainThmSmoothSec} in order to solve the coupled equations \eqref{coupled_dHYM_Intro}. All of this is extended to allow conical singularities in Section \ref{MainThmConicSec}; the main advantage is that in this case there exist solutions with positive coupling constants. Finally Section \ref{LargeRadiusSec} contains our results on the large and small radius limits.\\

\noindent \textbf{Acknowledgements.} We are grateful to Claudio Arezzo, Zak Dyrefelt, Annamaria Ortu and Carlo Scarpa for some discussions related to the material presented here. 

\section{Momentum construction}\label{momentumSec}
Let $X = \mathbb{P} \left(\mathcal{L}\oplus \mathcal{O} \right) \to \Sigma$ be a ruled surface as in the Introduction. Let 
\begin{equation*}
E_0 = \mathbb{P}\left(0\oplus \mathcal{O} \right),\,E_{\infty} = \mathbb{P}\left(\mathcal{L}\oplus 0 \right)
\end{equation*}
denote respectively the zero section and the infinity section of the $\mathbb{CP}^1$-bundle $X$ over $\Sigma$, with general fibre $C$. We have the straightforward intersection formulae:
\begin{equation}\label{intersection}
E_0 \cdot E_0 =-E_{\infty} \cdot E_{\infty}=k,\quad C \cdot C =0, \quad C \cdot  E_0 = C \cdot E_{\infty} =1.
\end{equation}
We will follow the standard \emph{momentum construction} (sometimes called the Calabi ansatz, see e.g. \cite{hwang}) for metrics on the complement of the zero section $X_0= \mathcal{L} \setminus E_0$, which extend across the zero and infinity sections of $X$ under suitable conditions.

Thus we consider metrics of the form
\begin{equation}\label{ansatzMetric}
\omega = \frac{p^*\omega_{\Sigma}}{x}+\ii \del \delbar {f}(s),
\end{equation}
where $x$ is a real parameter satisfying $0<x<1$, while ${f}$ is a strictly convex function, such that ${f}' : X_0 \to \left(-1,1 \right)$.
The real coordinate $s$ is the log-norm of the Hermitian metric $h(z)$ on $\mathcal{L}$ for which $-\del_z \delbar_z \log(h) = F(h) = -\ii\omega_{\Sigma}$. Considering a trivialization $U \subset\mathcal{L}$ with adapted bundle coordinates $\left(z,w \right)$, $s$ is given by
\begin{equation*}
s = \text{log} |\left(z,w \right)|^2_h = \text{log } |w|^2 + \text{log } h(z),
\end{equation*}
and it follows that
\begin{equation*}
\ii \del_{w} \delbar_{w}f(s)= \ii {f''(s)}\frac{dw \wedge d\bar{w}}{|w|^2}
\end{equation*}
and
\begin{equation*}
\ii \del_{z} \delbar_{z}f(s)= -f'(s)\omega_{\Sigma} + \ii f''(s)\frac{\del_z h \delbar_z h}{h^2}
\end{equation*}

If we choose $U$ such that $d \text{log } h (z_0)=0$ in $\left(z_0,w_0 \right)$, at this point all the mixed derivatives vanish and so we find
\begin{equation*}
\omega = \frac{1-x {f}'(s)}{x}\omega_{\Sigma} + \ii {f}''(s)\frac{dw \wedge d\bar{w}}{|w|^2};
\end{equation*}
moreover we also have, globally,
\begin{equation*}
\omega^2 = \frac{2}{|w|^2}\frac{1-x {f}'(s)}{x}{f}''(s)\omega_{\Sigma}\wedge\ii{dw \wedge d\bar{w}}.
\end{equation*}
Since $f(s)$ is strictly convex, we may consider its Legendre transform $u(\tau)$, a function of the
variable $\TT = f'(s)$, and define the \textit{momentum profile} 
\begin{equation*}
\phi (\TT) = \frac{1}{u''(\tau)}= f''(s),
\end{equation*}
which must satisfy the condition 
\begin{equation}\label{positivityConditionT}
\phi(\TT) > 0, \quad -1< \TT <1,
\end{equation}
required for $\omega$ to be positive.
Moreover the momentum construction shows that in order to extend $\omega$ across $w=0$ and $w= \infty$, 
$\phi (\TT)$ must satisfy the boundary conditions
\begin{equation}\label{boundaryConditionsT}
\lim_{\TT\to \pm 1} \phi (\TT) = 0, \quad \lim_{\TT\to \pm 1} \phi' (\TT) = \mp 1.
\end{equation}
The space $H^2(X, \R)$ is generated by the Poincar\'e duals of $E_0$ and $C$. Following \cite{keller}, we define the $2$-form 
\begin{equation*}
\beta = \frac{x^2}{\left( 1-xf'(s)\right)^2}\left(\frac{1-x {f}'(s)}{x}\omega_{\Sigma} - \ii {f}''(s)\frac{dw \wedge d\bar{w}}{|w|^2} \right).
\end{equation*}
A direct computation shows that $\beta$ is a closed $(1,1)$-form, traceless with respect to $\omega$, and  $\left\{ \omega, \beta \right\}$ is a basis for the space $H^2(X, \R)$.
We consider now a real $(1,1)$ cohomology class and its representative 
\begin{equation}\label{curvatureBasis}
F_0 = c_1 \omega + c_2 \beta.
\end{equation}
In order to identify $\ii F_0$ with the curvature form of a connection on some line bundle over $X$, $ [ F_0/(2\pi) ]$ must be an integral class. For $\left[ F_0 \right] = a \left[E_0 \right]  + b \left[C \right] $, using the identities \eqref{intersection}, we have
\begin{equation} \label{explicitCoordinates}
a = \int_CF , \qquad b = \int_{E_0} F_0 - k \int_C F_0.
\end{equation}
Since $E_0 = (f')^{-1}(-1)$, we get
\begin{equation*}
\int_{E_0} \omega = \frac{(1+x)}{x}\int_{\Sigma}\omega_{\Sigma} = 2\pi k \frac{(1+x)}{x}
\end{equation*}
and 
\begin{equation*}
\int_{E_0} \beta = \frac{x}{(1+x)}\int_{\Sigma}\omega_{\Sigma} = 2\pi k \frac{x}{(1+x)}.
\end{equation*}
For the general fibre $C$, let $w$ denote the bundle adapted coordinate along the fibre and define $r = |w|$, such that $s = 2\log r $ and $d/ds = \frac{r}{2}d/dr$. Using the boundary conditions \eqref{boundaryConditionsT}, we have
\begin{align*}
\int_C \omega &= \int_{\C \setminus \left\{0 \right\} }\ii {f}''(s)\frac{dw \wedge d\bar{w}}{|w|^2}
\\&=\int_{-\infty}^{+\infty}\int_0^{2\pi} \frac{d}{dr}f'(s)dr\wedge d\theta
\\&=2\pi \left(\lim_{s\to \infty}f'(s) - \lim_{s\to -\infty}f'(s) \right)
\\&= 4\pi
\end{align*}
and similarly
\begin{align*}
\int_C \beta = -4\pi \frac{x^2}{1-x^2}.
\end{align*}

Using \eqref{explicitCoordinates}, we obtain
\begin{equation*}
\left[ \frac{F_0}{2\pi} \right] = \left( 2c_1-2\frac{x^2}{1-x^2}c_2\right) E_0 + \left( \frac{1-x}{x}kc_1 + \frac{x}{1-x}kc_2 \right)C.
\end{equation*}
If we introduce the new parametrisation  
\begin{equation}\label{integralityConditions}
x=\frac{k}{k+k'}, \; c_1 = k_1, \; c_2 = \frac{1-x^2}{x^2}k_2,
\end{equation}
for real $k_1$, $k_2$ and $k' > 0$, then a direct calculation shows that the cohomology classes of $[\omega]$ and $[F_0]$ are given by our previous formulae
\begin{align*} 
[\omega] &= 2\pi[ 2 E_0 + k' C],\\
[ F_0 ] &= 2\pi[2 (k_1 - k_2)  E_0  + (2 k k_2 + k' (k_1  + k_2 )  C].
\end{align*} 
In particular we see that the choices $k' \in \Z_{>0}$ and $k_i \in \Z$ for $i=1,2$ correspond to integral classes.

\section{$\rm{d}$HYM on ruled surfaces}\label{dHYMSec}

In this Section we will solve the dHYM equation \eqref{dHYMintro} on $X$ explicitly, with respect to a fixed K\"ahler metric $\omega$ obtained by the momentum construction \eqref{ansatzMetric}. Given a class $\left[F\right]$ satisfying the integrality conditions \eqref{integralityConditions}, we may fix a holomorphic line bundle $L\to X$  with first Chern class $-2\pi\left[ c_1(L)\right]=\left[F\right] $.

Recall that the parameter $\hth$ is a topological constant determined by the condition 
\begin{equation*}
\int_X (\omega-\ii F)^2 \in \R_{>0}e^{\ii \hth}.
\end{equation*}
\begin{lem} We have
\begin{equation*}
e^{\ii \hth}= \frac{\left(1-k_1^2+k_2^2 -2\ii  k_1 \right)}{\sqrt{\left(1-k_1^2+k_2^2\right)^2 +\left(2 k_1 \right)^2}}.
\end{equation*}
\end{lem}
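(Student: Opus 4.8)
The plan is to exploit the fact that $\int_X(\omega-\ii F)^2$ is purely topological: it depends only on the cohomology classes $[\omega]$ and $[F]$ recorded in \eqref{CohClasses}, so that $\hth$ can simply be read off from a cohomological computation, with no PDE or analysis involved. First I would expand the integrand as
\[
(\omega-\ii F)^2 = \omega^2 - 2\ii\,\omega\wedge F - F^2,
\]
so that $\int_X(\omega-\ii F)^2 = [\omega]^2 - 2\ii\,[\omega]\cdot[F] - [F]^2$, a complex number whose real part is $[\omega]^2-[F]^2$ and whose imaginary part is $-2\,[\omega]\cdot[F]$.

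Next I would evaluate the three intersection numbers using the formulae \eqref{intersection}, namely $E_0\cdot E_0=k$, $C\cdot C=0$, $E_0\cdot C=1$, applied to the classes \eqref{CohClasses}. Writing $[F]/(2\pi)=a\,E_0+b\,C$ with $a=2(k_1-k_2)$ and $b=2kk_2+k'(k_1+k_2)$, a short bilinear computation gives
\[
[\omega]^2 = (2\pi)^2\,4(k+k'),\qquad [\omega]\cdot[F] = (2\pi)^2\,4k_1(k+k'),
\]
while $[F]^2=(2\pi)^2(a^2k+2ab)$ simplifies, via $ak+2b=2(k+k')(k_1+k_2)$, to the factored form $[F]^2=(2\pi)^2\,4(k+k')(k_1-k_2)(k_1+k_2)=(2\pi)^2\,4(k+k')(k_1^2-k_2^2)$.

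Substituting these back, the common positive factor $(2\pi)^2\,4(k+k')$ pulls out of all three terms and I obtain
\[
\int_X(\omega-\ii F)^2 = (2\pi)^2\,4(k+k')\Bigl[(1-k_1^2+k_2^2)-2\ii k_1\Bigr].
\]
Since $k,k'>0$, the prefactor is a positive real and is therefore absorbed into the $\R_{>0}$ in the defining relation $\int_X(\omega-\ii F)^2\in\R_{>0}\,e^{\ii\hth}$. Dividing by the modulus then yields exactly $e^{\ii\hth}=\bigl((1-k_1^2+k_2^2)-2\ii k_1\bigr)\big/\sqrt{(1-k_1^2+k_2^2)^2+(2k_1)^2}$, as claimed.

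I do not expect a genuine obstacle here, as the statement reduces to a direct intersection-theoretic calculation. The only points that require care are the sign and normalisation conventions hidden in the definition of $\hth$ (which is precisely why the statement extracts the phase by dividing out the positive real multiple), and the bookkeeping of the three intersection products; the pleasant cancellation giving the real part $[\omega]^2-[F]^2=(2\pi)^2\,4(k+k')(1-k_1^2+k_2^2)$ together with the factoring of $[F]^2$ serve as the main computational checkpoints confirming the answer.
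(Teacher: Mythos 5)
Your proposal is correct, and it reaches the result by a genuinely different route from the paper. You work purely cohomologically: expanding $(\omega-\ii F)^2$ and evaluating $[\omega]^2$, $[\omega]\cdot[F]$, $[F]^2$ via the intersection formulae \eqref{intersection} applied to the classes \eqref{CohClasses}; your three intersection numbers, the identity $ak+2b=2(k+k')(k_1+k_2)$, and the resulting factorisation $\int_X(\omega-\ii F)^2=(2\pi)^2\,4(k+k')\bigl[(1-k_1^2+k_2^2)-2\ii k_1\bigr]$ all check out, and since $k,k'>0$ the positive prefactor is legitimately absorbed into $\R_{>0}$. The paper instead stays inside the momentum construction: it uses that $\beta$ is traceless with respect to $\omega$ (so $\int_X\omega\wedge\beta=0$), computes $\int_X\omega^2=16\pi^2 k/x$ and $\int_X\beta^2=-\tfrac{16\pi^2k}{x}\tfrac{x^4}{(1-x^2)^2}$ by fibrewise integration with the boundary conditions \eqref{boundaryConditionsT}, and then substitutes the parametrisation \eqref{integralityConditions} of $F_0=c_1\omega+c_2\beta$. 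The two computations are consistent (note $16\pi^2 k/x=(2\pi)^2\,4(k+k')$), and each has its advantage: yours is more elementary and manifestly topological, requiring only the class formulae \eqref{CohClasses} already established in Section 2, while the paper's reuses the explicit basis $\{\omega,\beta\}$ and its integrals, which are needed again elsewhere in the momentum-construction analysis. In effect the paper re-derives the relevant intersection data analytically where you simply quote it; since \eqref{CohClasses} is itself proved by integrals of the same kind, the logical content is equivalent, and your argument is a valid self-contained proof.
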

\begin{proof} Since $\beta$ is traceless with respect to $\omega$, we only need to compute the quantities $\int_X \omega^2$, $\int_X \beta^2$. We have 
\begin{align*}
\int_X \omega^2 &= 2\int_X f''(s) \frac{( 1- xf'(s) )}{x}\omega_{\Sigma}\wedge \frac{dw \wedge d\bar{w}}{|w|^2}
\\&=4\pi \int_{\Sigma}\omega_{\Sigma}\int_{0}^{\infty}\frac{d}{dr}( \frac{f'(s)}{x}+\frac{( f'(s))^2}{2} ) dr
\\&=\frac{16\pi^2 k}{x}
\end{align*}
and similarly
\begin{equation*}
\int_X \beta^2 = -\frac{16\pi^2 k}{x}\frac{x^4}{(1-x^2)^2}
\end{equation*}
Using \eqref{integralityConditions}, we find
\begin{equation*}
\int_X \left(\omega-\ii F\right)^2 = \frac{16\pi^2 k}{x}\left(1-k_1^2+k_2^2 -2\ii  k_1 \right),
\end{equation*} 
from which the claim follows immediately.
\end{proof}
In order to solve the dHYM equation in the class $\left[F_0\right]$ we extend the momentum construction by making the ansatz
\begin{equation}\label{ansatzCurvature}
F = F_g = F_0 + \ii \del \delbar g(s).
\end{equation}
It will be convenient to introduce the function $\nu (\tau)$ given by the image of $g'(s)$ under the Legendre transform diffeomorphism relative to $f(s)$.
\begin{lem} The form $\ii \del \delbar g(s)$ extends smoothly to an exact form on $X$ iff $\nu(\tau)$ extends smoothly to the interval $[-1,1]$ and vanishes at the boundary points.  
\end{lem}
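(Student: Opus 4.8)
The plan is to decouple the statement into two independent conditions on the single–variable function $\nu$: a regularity condition governing smooth extension across the two sections, and a cohomological condition expressing exactness through two period integrals. First I would record the shape of $\ii\del\delbar g(s)$. Repeating verbatim the computation already carried out for $f(s)$, but with $g$ in place of $f$, gives at a point where $d\log h=0$
\begin{equation*}
\ii\del\delbar g(s) = -g'(s)\,\omega_{\Sigma} + \ii\, g''(s)\,\frac{dw\wedge d\bar w}{|w|^2}.
\end{equation*}
Since $\tau=f'(s)$ satisfies $d\tau/ds=f''(s)=\phi(\tau)$, and $\nu(\tau)=g'(s)$ by definition, we have $g''(s)=\phi(\tau)\nu'(\tau)$, so that
\begin{equation*}
\ii\del\delbar g(s) = -\nu(\tau)\,\omega_{\Sigma} + \ii\,\phi(\tau)\nu'(\tau)\,\frac{dw\wedge d\bar w}{|w|^2}.
\end{equation*}
This is structurally identical to the expression for $\omega$ coming from \eqref{ansatzMetric}, with the base coefficient $\tfrac{1-x\tau}{x}$ replaced by $-\nu(\tau)$ and the momentum profile $\phi(\tau)$ replaced by $\phi(\tau)\nu'(\tau)$. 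I would therefore analyze its extension across $E_0$ and $E_{\infty}$ by the very same momentum--construction argument that governs $\omega$.

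For the smoothness step the key point is that $\tau$ extends to a smooth $S^1$-invariant function $X\to[-1,1]$, equal to $-1$ on $E_0$ and $+1$ on $E_{\infty}$, and that near $E_0$ the boundary conditions \eqref{boundaryConditionsT} force $\phi(\tau)\sim(\tau+1)$, so that $\tau+1$ vanishes to first order in $t=e^s=|w|^2 h$; the situation at $E_{\infty}$ is symmetric. By the standard analysis (as in \cite{hwang,keller}), a circle-invariant form $a(\tau)\,\omega_{\Sigma}+\ii\,b(\tau)\tfrac{dw\wedge d\bar w}{|w|^2}$ extends smoothly across $E_0$ precisely when $a$ and $b$ are smooth up to $\tau=-1$ and $b(-1)=0$: the vanishing $b(-1)=0$ lets one write $b(\tau)=t\cdot(\text{smooth})$, cancelling the fibre pole, while the cross-terms in $\del s\wedge\delbar s$ carry the factor $g''(s)/w\sim\bar w\cdot(\text{smooth})$ and hence extend as well. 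In our case $a=-\nu$ and $b=\phi\nu'$; since $\phi(\pm1)=0$, the vanishing $b(\pm1)=0$ is automatic once $\nu$ is $C^1$ up to the endpoints, and smoothness of $a$ and $b$ is equivalent to smoothness of $\nu$ (the change of variables $\tau\leftrightarrow t$ being a diffeomorphism up to the boundary). Thus $\ii\del\delbar g(s)$ extends smoothly to $X$ if and only if $\nu$ extends smoothly to $[-1,1]$.

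Assuming it extends to a smooth closed form $\Omega$, exactness means $[\Omega]=0$ in $H^2(X,\R)$, which — since $[E_0]$ and $[C]$ generate — is equivalent to $\int_{E_0}\Omega=\int_C\Omega=0$. Restricting to $E_0$ annihilates the fibre term and leaves $-\nu(-1)\,\omega_{\Sigma}$, so
\begin{equation*}
\int_{E_0}\Omega=-\nu(-1)\int_{\Sigma}\omega_{\Sigma}=-2\pi k\,\nu(-1),
\end{equation*}
while, exactly as in the computation of $\int_C\omega$,
\begin{equation*}
\int_C\Omega=\int_{-\infty}^{\infty}\!\int_0^{2\pi}g''(s)\,ds\,d\theta=2\pi\big(\nu(1)-\nu(-1)\big).
\end{equation*}
These two periods vanish simultaneously if and only if $\nu(-1)=0$ and $\nu(1)=\nu(-1)$, i.e. $\nu(\pm1)=0$. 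Combining this with the smoothness criterion yields the claimed equivalence. I expect the genuine obstacle to lie in the smoothness step: making the reduction to ``$\nu$ smooth'' fully rigorous requires controlling the cross-terms and the regularity of the change of variables $\tau\leftrightarrow t$ up to the boundary, which is why I would invoke the established momentum-construction estimates of \cite{hwang,keller} rather than reprove them here.
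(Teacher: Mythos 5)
Your proposal is correct and follows essentially the same route as the paper: reduce to the fibre component $\ii\,\phi(\tau)\nu'(\tau)\,\frac{dw\wedge d\bar w}{|w|^2}$ (plus the base term $-\nu(\tau)\,\omega_{\Sigma}$) to get the smoothness criterion, then compute the two periods $\int_{E_0}$ and $\int_C$, which vanish simultaneously iff $\nu(\pm 1)=0$. The only difference is one of detail: the paper asserts the smoothness equivalence in one line, whereas you flesh it out via the standard Hwang--Singer boundary analysis, and you justify the two-period criterion by nondegeneracy of the intersection form where the paper cites \eqref{explicitCoordinates} --- both are the same argument in substance.
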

\begin{proof} The component of $\ii \del \delbar g(s)$ in the fibre direction is
\begin{equation*}
\ii g''(s)\frac{dw \wedge d\bar{w}}{|w|^2}= \ii \nu'(\TT)\phi(\TT)\frac{dw \wedge d\bar{w}}{|w|^2}.
\end{equation*}
So $\ii \del \delbar g(s)$ extends smoothly to $X$ iff $\nu(\tau)$ extends smoothly to $[-1,1]$. In order to derive the appropriate boundary behaviour so that this extension is still exact, we compute 
\begin{equation*}
\int_{E_0} \del \delbar g = -2\pi k\left( \lim_{s\to -\infty}g'(s)\right)
\end{equation*}
and
\begin{equation*}
\int_C \del \delbar g = 2\pi \left(\lim_{s\to \infty}g'(s) - \lim_{s\to -\infty}g'(s) \right).
\end{equation*}
Using \eqref{explicitCoordinates}, the only conditions we need to impose are
\begin{equation}\label{exactnessConditions}
\lim_ {\TT \to \pm 1} \nu(\TT) = 0.
\end{equation}
\end{proof}
Our next result shows how to reduce the dHYM equation to an ODE. It is convenient to introduce the new variable 
\begin{equation*}
t = 1/x-\TT
\end{equation*}
as well as the auxiliary function
\begin{equation}\label{definitionH}
H(t) =  k_1 t + \frac{k_2}{t}\frac{1-x^2}{x^2} -\nu(t).
\end{equation}
\begin{prop}\label{dHYMODEProp} Under the momentum construction \eqref{ansatzMetric}, \eqref{ansatzCurvature}, the dHYM equation is equivalent to the ODE
\begin{equation}\label{HdHYM}
H'(t) = \frac{t\sint  +H(t)\cost }{H(t)\sint -t\cost },
\end{equation}
together with the boundary conditions
\begin{align}\label{bdryH}
\nonumber &H\lp\frac{1+x}{x} \rp = k_1\lp\frac{1+x}{x} \rp+k_2\lp\frac{1-x}{x} \rp, \\
&H\lp\frac{1-x}{x} \rp = k_1\lp\frac{1-x}{x} \rp+k_2\lp\frac{1+x}{x} \rp.
\end{align}
\end{prop}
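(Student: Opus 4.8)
The plan is to exploit the fact that, in complex dimension $n=2$, the dHYM equation \eqref{dHYMintro} is equivalent to the vanishing condition $\im\lp e^{-\ii\hth}(\omega-\ii F)^2\rp=0$ recorded in Remark \ref{symmetryRmk}. Since $\omega$ and $F=F_g$ are both built from the momentum construction, the first step is to diagonalise the endomorphism $\omega^{-1}F$ simultaneously, so that $(\omega-\ii F)^2/\omega^2$ becomes a product $(1-\ii\lambda_1)(1-\ii\lambda_2)$ of explicit scalar functions; as $\omega^2>0$, the dHYM equation then collapses to a single scalar relation, which I will solve for $H'$.

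Concretely, I would work at a point where $d\log h=0$, so that all mixed terms drop and, writing $A=(1-xf'(s))/x$ and $B=f''(s)$, both forms are diagonal in the frame $\{\omega_\Sigma,\ \ii\,dw\wedge d\bar w/|w|^2\}$: one has $\omega=A\,\omega_\Sigma+\ii B\,dw\wedge d\bar w/|w|^2$, and expanding $\beta$ in the same frame together with $F_0=c_1\omega+c_2\beta$ and $\ii\del\delbar g=-g'(s)\,\omega_\Sigma+\ii g''(s)\,dw\wedge d\bar w/|w|^2$ gives $F=P\,\omega_\Sigma+\ii Q\,dw\wedge d\bar w/|w|^2$ with explicit $P,Q$. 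The eigenvalues of $\omega^{-1}F$ are then $\lambda_1=P/A$ and $\lambda_2=Q/B$, globally well defined as functions of $s$. The key computation is to rewrite these in terms of $H$: using $A=1/x-\TT=t$, the parametrisation \eqref{integralityConditions}, and $g'(s)=\nu(\TT)$, one finds $P=H(t)$, hence $\lambda_1=H(t)/t$; and using $g''(s)=\nu'(\TT)\phi(\TT)$ from the previous Lemma together with the affine change $t=1/x-\TT$ (so that $d/dt=-d/d\TT$), one finds $\lambda_2=H'(t)$.

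I expect this identification $\lambda_2=H'(t)$ to be the main point requiring care: it is exactly here that the Legendre transform and the sign coming from $t=1/x-\TT$ conspire, turning the fibre-direction eigenvalue into the $t$-derivative of $H$. Everything downstream is purely algebraic. Expanding $(1-\ii\lambda_1)(1-\ii\lambda_2)=(1-\lambda_1\lambda_2)-\ii(\lambda_1+\lambda_2)$ and imposing $\im\lp e^{-\ii\hth}\,(\,\cdot\,)\rp=0$ yields $\sint\,(1-\lambda_1\lambda_2)+\cost\,(\lambda_1+\lambda_2)=0$; substituting $\lambda_1=H/t$, $\lambda_2=H'$, clearing the factor $t$, and solving the resulting linear equation for $H'$ produces exactly \eqref{HdHYM}.

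Finally, the boundary conditions \eqref{bdryH} come from the endpoint behaviour. The endpoints $t=(1+x)/x$ and $t=(1-x)/x$ correspond to $\TT=-1$ (the section $E_0$) and $\TT=+1$ (the section $E_\infty$), where the exactness conditions \eqref{exactnessConditions} force $\nu=0$. At each endpoint the definition \eqref{definitionH} reduces to $H(t)=k_1 t+\frac{k_2}{t}\frac{1-x^2}{x^2}$, and substituting the two values of $t$ and simplifying reproduces \eqref{bdryH}.
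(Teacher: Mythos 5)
Your proposal is correct and is essentially the paper's own argument: both work at a point where $d\log h = 0$, expand $\omega$ and $F_g$ in the frame $\{\omega_\Sigma,\ \ii\,dw\wedge d\bar w/|w|^2\}$, pass through the Legendre transform and the affine change $t = 1/x - \TT$, and reduce $\im\lp e^{-\ii\hth}(\omega-\ii F_g)^2\rp = 0$ to $\sint\lp t - HH'\rp + \cost\lp H + tH'\rp = 0$, with the boundary conditions coming from $\nu(\pm 1) = 0$ exactly as you describe. Your packaging of the computation via the eigenvalues $\lambda_1 = H/t$, $\lambda_2 = H'$ of $\omega^{-1}F$ is a tidier bookkeeping of the same calculation (the paper's Lemma \ref{RealPartLemma} implicitly confirms these identifications), and your sign check that $g'' = \phi\,\nu'$ together with $d/dt = -d/d\TT$ yields $\lambda_2 = H'(t)$ is precisely the step where the paper's expression \eqref{dHYMsVariable} ``becomes much simpler.''
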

\begin{proof} At a point $(z_0,w_0)$ such that $d \text{log } h (z_0)=0$, we have
\begin{align*}
&\omega -i F_g = \\&\left( \left(1-\ii k_1 \right)\frac{1-xf'}{x} -\ii \frac{k_2}{x} \frac{ 1-x^2}{ 1-xf'}+\ii g' \right)\omega_{\Sigma}+
\\& \left( f''\left(1-\ii k_1 +\ii k_2 \frac{ 1-x^2}{ 1-xf'}  \right) -\ii g''\right)\ii\frac{dw \wedge d\bar{w}}{|w|^2}.
\end{align*}
and we obtain the global identity
\begin{align}\label{dHYMsVariable}
\nonumber&\frac{1}{2}\im \lp e^{-\ii \hth}\lp \omega-\ii F_g\rp^2 \rp / \ii\frac{dw \wedge d\bar{w}}{|w|^2}\wedge\omega_{\Sigma} =  \\
\nonumber&-\sint \lp f'' \frac{1-xf'}{x}+\lp g'-\frac{k_2}{x}\frac{ 1-x^2}{ 1-xf'} -\frac{k_1}{x}+k_1f'\rp \lp g''+k_1f''-k_2f''\frac{ 1-x^2}{ \lp 1-xf' \rp^2} \rp \rp \\
&+\cost \lp \frac{1-xf'}{x} \lp k_2f''\frac{ 1-x^2}{ \lp 1-xf' \rp^2} -g''-k_1f''\rp +f''\lp  g'-\frac{k_2}{x}\frac{ 1-x^2}{ 1-xf'}-\frac{k_1}{x} +k_1f'\rp \rp .
\end{align}
This expression becomes much simpler under the Legendre transform diffeomorphism in terms of the variable $\TT = f'(s)$, for which ${d\TT}/{ds} = \phi(\TT)$, and the additional affine change of variable $t = 1/x-\TT$. Setting 
\begin{equation*}
H(t) =  k_1 t+\frac{k_2}{t}\frac{1-x^2}{x^2} -\nu(t),
\end{equation*}
the dHYM equation is equivalent to
\begin{equation*}
2\phi\lp \cost \lp  H+tH'\rp+\sint \lp t-HH'\rp \rp=0
\end{equation*}
and, since $\phi >0$, also to
\begin{equation*} 
H' = \frac{t\sint  +H\cost }{H\sint -t\cost }.
\end{equation*}
A direct computation shows that the boundary conditions \eqref{exactnessConditions} for $g(s)$, rephrased in term of $H(s)$, become the constraints \eqref{bdryH}.
\end{proof}
\begin{cor} The ODE \eqref{HdHYM} is solvable with the boundary conditions \eqref{bdryH} iff the ``stability condition"
\begin{equation*} 
\lp 1 + \lp k_1 + k_2 \rp^2 \rp > x \lp 1 + \lp k_1 -k_2 \rp^2 \rp
\end{equation*}
holds.
\end{cor}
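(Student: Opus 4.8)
The plan is to integrate the ODE \eqref{HdHYM} explicitly by exhibiting a first integral, and then to read the boundary value problem off as a condition on the level sets of that first integral. First I would observe that \eqref{HdHYM} admits the conserved quantity
\[
\Phi(t,H) = \sint\,(t^2 - H^2) + 2\cost\, t H = \im\!\lp e^{\ii\hth}(t+\ii H)^2\rp .
\]
Indeed, differentiating $\Phi$ along a solution gives $\tfrac{d}{dt}\Phi = 2(t\sint+H\cost) - 2H'(H\sint - t\cost)$, which vanishes upon substituting \eqref{HdHYM}. (This is in fact already implicit in the proof of Proposition \ref{dHYMODEProp}: after the Legendre substitution the pointwise dHYM integrand in \eqref{dHYMsVariable} becomes a positive multiple of $\tfrac{d}{dt}\Phi$.) Since the quadratic form $\Phi$ is indefinite with determinant $-1$, its level sets $\{\Phi=c\}$ are rectangular hyperbolas, and every solution of \eqref{HdHYM} lies on one of them. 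Solving the conic for $H$, using $\sint>0$ (which holds since $k_1<0$), yields
\[
H_{\pm}(t) = \frac{1}{\sint}\lp \cost\,t \pm \sqrt{t^2 - c\,\sint}\,\rp,
\]
and a direct check shows that the denominator of \eqref{HdHYM} is $D(t):=H\sint - t\cost = \pm\sqrt{t^2 - c\sint}$ on the two branches; thus the branch is recorded by $\operatorname{sign}D$, and the branches meet at the turning point $t_\ast=\sqrt{c\sint}$ (when $c\sint>0$), where \eqref{HdHYM} has a vertical tangent.

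Next I would show that the two boundary conditions \eqref{bdryH} automatically place the endpoints on the \emph{same} level set. This is not a computation but a consequence of the normalisation of $\hth$: integrating the pointwise identity above over $X$ for the reference curvature $F_0$ (whose profile has $\nu(\pm1)=0$, so its endpoint values are exactly those in \eqref{bdryH}), the integrand is a positive multiple of the total derivative $\tfrac{d}{dt}\Phi\,dt$, so
\[
\int_X \im\!\lp e^{-\ii\hth}(\omega-\ii F_0)^2\rp = c_0\lp \Phi\!\lp\tfrac{1+x}{x}\rp - \Phi\!\lp\tfrac{1-x}{x}\rp\rp, \qquad c_0>0 .
\]
The left-hand side vanishes because $\int_X(\omega-\ii F_0)^2\in\R_{>0}\,e^{\ii\hth}$ by the formula for $e^{\ii\hth}$. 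Hence the prescribed endpoints both lie on $\{\Phi=c\}$ for the single value $c=\Phi(\tfrac{1-x}{x})$.

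Having located both endpoints on one hyperbola, a solution of the boundary value problem is exactly a smooth graph $t\mapsto H(t)$ over $[\tfrac{1-x}{x},\tfrac{1+x}{x}]$ lying on it. Since $t>0$ there and $t^2-c\sint = D(t)^2$ is increasing in $t$, such a graph exists iff the endpoints lie on the same branch, i.e. iff $D$ has the same sign at both endpoints; in that case the turning point lies strictly to the left of $\tfrac{1-x}{x}$ and the unique solution of \eqref{HdHYM} through the left endpoint reaches the right endpoint with the prescribed value. It remains to evaluate $D$ at the endpoints. Inserting $\sint=-2k_1/N$, $\cost=(1-k_1^2+k_2^2)/N$ with $N=\sqrt{(1-k_1^2+k_2^2)^2+4k_1^2}$ and the values \eqref{bdryH}, a short computation gives
\[
N\,D\!\lp\tfrac{1\pm x}{x}\rp = -\frac{1}{x}\lp \lp 1+(k_1+k_2)^2\rp \pm x\lp 1+(k_1-k_2)^2\rp\rp .
\]
The bracket for $D(\tfrac{1+x}{x})$ is strictly positive for all parameters, so $D(\tfrac{1+x}{x})D(\tfrac{1-x}{x})>0$ holds iff $1+(k_1+k_2)^2 > x(1+(k_1-k_2)^2)$, which is precisely \eqref{stabilityRuledSurfaces}. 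In the borderline case of equality one gets $D(\tfrac{1-x}{x})=0$, so the left endpoint is the turning point and the solution survives on the half-open interval with a vertical tangent there — the origin of the $C^{1,1/2}$ solutions in Theorem \ref{MainThmSmooth}.

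I expect the conceptual crux to be the second and third steps: recognising that the level-set matching is \emph{forced} by the cohomological definition of $\hth$ (so the genuine obstruction is not whether the endpoints share a level set, but whether they lie on the same branch of it), and correctly identifying the branch condition with the sign of the denominator of \eqref{HdHYM}. By contrast, the final evaluation of $D$ at the endpoints is routine trigonometric algebra once $\sint$ and $\cost$ are inserted.
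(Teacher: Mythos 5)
Your proposal is correct, and its computational core coincides with the paper's: your first integral $\Phi(t,H)=\sint\,(t^2-H^2)+2\cost\, tH$ is exactly what the paper produces by substituting $H=tv$ and separating variables in \eqref{vdHYM} --- the level set $\{\Phi=\mathrm{const}\}$ is the curve $\xi(v)=C/t^2$, and your two branches $H_\pm$ are the paper's \eqref{signSolutions}. Where you genuinely depart from the paper is in the treatment of the boundary conditions \eqref{bdryH}, which is the real content of the corollary. The paper fixes the branch $H_-$ and the constant $C$ by imposing the condition at $t=(1+x)/x$, then evaluates $H_-\lp (1-x)/x\rp$ in closed form; the stability condition \eqref{stabilityRuledSurfaces} emerges there from resolving an absolute value, i.e.\ from deciding which of the two intersections of the hyperbola with the line $t=(1-x)/x$ the chosen branch passes through. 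You instead prove a priori --- by integrating the pointwise identity \eqref{dHYMsVariable} over $X$ for the reference form $F_0$ and using $\int_X(\omega-\ii F_0)^2\in\R_{>0}e^{\ii\hth}$ --- that the two prescribed endpoints always lie on one and the same level set, so that solvability reduces to a branch-matching condition read off from the sign of the denominator $D=H\sint-t\cost$ at the endpoints; your formulae for $N D\lp (1\pm x)/x\rp$ are correct, and they yield precisely \eqref{stabilityRuledSurfaces}. This reorganisation buys a structural explanation that the paper leaves implicit (the cohomological normalisation of $\hth$ forces the level-set matching, so the obstruction is purely a branch condition), and it makes the equality case --- $D=0$ at the left endpoint, vertical tangent, $C^{1/2}$ regularity --- fall out transparently, where the paper needs a separate remark; conversely, the paper's direct evaluation is shorter and avoids the global integration step. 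Two small caveats, neither a gap: (i) with the paper's orientation conventions ($t=1/x-\tau$ reverses orientation in the fibre integral), the integrand of \eqref{dHYMsVariable} is a \emph{negative} multiple of $\frac{d}{dt}\Phi$, so your constant $c_0$ should carry the opposite sign --- immaterial, since the left-hand side vanishes; (ii) your quadratic-formula step and branch bookkeeping require $\sint\neq 0$, which holds by the standing assumption $k_1<0$ of Remark \ref{symmetryRmk}, and this is worth flagging as the exact point where that hypothesis enters (the paper's own proof likewise divides by $k_1$).
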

\begin{proof} Setting $tv=H$, equation \eqref{HdHYM} becomes
\begin{equation}\label{vdHYM}
tv' = -2 \frac{\xi(v)}{\xi'(v)},
\end{equation}
with $\xi(v)= v^2\sint -2v\cost-\sint$.
Solving \eqref{vdHYM} by separation of variables, we get
\begin{equation*}
\xi(v) = \frac{C}{t^2},
\end{equation*}
which has two solutions given by
\begin{equation}\label{signSolutions}
H_{\pm}(t) = t \cott \pm \sqrt{\lp\cot^2\hth+1\rp\lp t^2+C'\rp},
\end{equation}
with $C' = C\sint$.
We need to impose the appropriate boundary conditions \eqref{bdryH}. The first condition at ${1}/{x}+1$ holds iff   we choose the solution $H_-$ in \eqref{signSolutions} and set
\begin{equation*}
C=\frac{-2k_2 \lp 1 + \lp k_1 + k_2\rp^2 - x^2 - \lp k_1 - k_2\rp^2 x^2\rp}{x^2\sqrt{\left(1-k_1^2+k_2^2\right)^2 +\left(2 k_1 \right)^2}}.
\end{equation*}
In this case, at ${1}/{x}-1$ we have
\begin{align*}
& H_- \lp \frac{1-x}{x} \rp = \frac{1}{2xk_1}\lp -k_1^2\lp -1+x \rp + \lp 1 +k_2^2\rp \lp -1 +x \rp\rp \\&+  \left| \frac{-1-\lp k_1 + k_2 \rp^2 +x+ \lp k_1-k_2 \rp^2x}{2k_1x}\right|\\
&= \begin{cases} k_1\lp\frac{1-x}{x} \rp+k_2\lp\frac{1+x}{x} \rp &\mbox{if } \lp 1 + \lp k_1 + k_2 \rp^2 \rp > x \lp 1 + \lp k_1 -k_2 \rp^2 \rp \\
\frac{\lp 1+ k_2^2 \rp \lp x -1 \rp-k_1 k_2 \lp 1 + x \rp}{k_1 x} & \mbox{if } \lp 1 + \lp k_1 + k_2 \rp^2 \rp < x \lp 1 + \lp k_1 -k_2 \rp^2 \rp, \end{cases}
\end{align*}
so the second condition in \eqref{bdryH} holds iff we have
\begin{equation*} 
\lp 1 + \lp k_1 + k_2 \rp^2 \rp > x \lp 1 + \lp k_1 -k_2 \rp^2 \rp.
\end{equation*}
\end{proof}
\begin{rmk} Jacob and Yau \cite{jacob} showed that the solvability of the dHYM equation on compact K\"ahler surfaces is equivalent to a certain numerical ``stability condition". Considering the closed, real $(1,1)$-form
\begin{equation*}
\Omega = \cot\hth\,\omega - F,
\end{equation*}
the relevant condition is  $\left[ \Omega \right]>0$.
In our setting, when we regard $H^2(X, \mathbb{R})$ as $\mathbb{R}^2$ with the basis provided by the Poincar\'e duals of $E_0$ and $C$ and coordinates $(a_1,a_2)$, the K\"ahler cone is identified with the subset $\{a_1 >0, a_2>0\}$. A computation shows that the $\left[ \Omega \right]$ is positive precisely when the condition \eqref{stabilityRuledSurfaces} is satisfied.
\end{rmk}
\begin{rmk} Suppose equality holds instead in our ``stability condition" \eqref{stabilityRuledSurfaces},
\begin{equation*} 
\lp 1 + \lp k_1 + k_2 \rp^2 \rp = x \lp 1 + \lp k_1 -k_2 \rp^2 \rp.
\end{equation*}
A direct computation then shows that the quantity $t^2 + C'$ vanishes at the endpoint $t = 1/x-1$.
By our explicit formula \eqref{signSolutions} we see that the function $H_-(t)$ is smooth on the interval $(1/x-1, 1/x+1]$ and extends to a $C^{1/2}$ function on its closure. Thus, for fixed background $\omega$, we obtain a corresponding solution to the dHYM equation which is smooth on $X \setminus E_{\infty}$ and extends to a form with $C^{1/2}$ coefficients on $X$. This should be compared with a result of Takahashi \cite{takaCollapse} which holds for a general compact K\"ahler surface $X$, and states that under suitable assumptions, when the class $[\Omega]$ above is only semipositive, then there exists a solution to the dHYM equation which is smooth on the complement of finitely many holomorphic curves of negative self-intersection and which extends to a closed current on $X$. 
\end{rmk}
\section{Coupled equations}\label{MainThmSmoothSec}

In the previous Section we solved the dHYM equation in suitable integral classes, determining explicitly the Legendre transform of the curvature form $F$ in terms of the K\"ahler metric $\omega$. More precisely, let us assume that the ``stability condition"
\begin{equation*} 
\lp 1 + \lp k_1 + k_2 \rp^2 \rp > x \lp 1 + \lp k_1 -k_2 \rp^2 \rp
\end{equation*}
holds, and let us denote by $F = F(\omega)$ the unique curvature form constructed in the previous Section. 

In this Section we will complete the proof of Theorem \ref{MainThmSmooth} by solving the second equation in \eqref{coupled_dHYM_Intro}. We also establish the real analyticity of our solutions, Proposition \ref{HKextensionProp}.

Recall we are concerned with the equation
\begin{equation}\label{coupledEquationsRE}
s(\omega) -\alpha \re \lp e^{-\ii \hth}\frac{\lp\omega-\ii F \rp^{2}}{\omega^2} \rp  = \hat{s}-\alpha \hat{r},
\end{equation}
where the constants $\hat{s}$ and $\hat{r}$ can be computed as
\begin{equation*}
\hat{s}= 2xs_{\Sigma}+2, \; \hat{r}= \sqrt{\lp1-k_1^2+k_2^2 \rp ^2+4k_1^2}.
\end{equation*}
\begin{lem}\label{RealPartLemma} In terms of the variable $t= 1/x-\TT$ and the function $H(t)$ appearing in \eqref{HdHYM}, we have
\begin{align*}
&\re \lp e^{-\ii \hth}\frac{\lp\omega-\ii F \rp^{2}}{\omega^2} \rp \\&= \cos \hth \lp 1- \frac{H(t) H'(t)}{t} \rp -\sin \hth \lp H'(t) + \frac{H(t)}{t} \rp.
\end{align*}
\end{lem}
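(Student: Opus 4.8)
The plan is to extract this identity from the very same complex computation that produced Proposition \ref{dHYMODEProp}: there the \emph{imaginary} part of $e^{-\ii\hth}(\omega-\ii F)^2$ gave the dHYM ODE, and here I would read off the \emph{real} part of the normalised quantity $e^{-\ii\hth}(\omega-\ii F)^2/\omega^2$. The starting point is that at a point $(z_0,w_0)$ with $d\log h(z_0)=0$ both $\omega$ and $\omega-\ii F$ are combinations of $\omega_\Sigma$ and $\ii\,dw\wedge d\bar w/|w|^2$ only, so their squares are both proportional to the top form $\omega_\Sigma\wedge \ii\,dw\wedge d\bar w/|w|^2$, and the ratio $(\omega-\ii F)^2/\omega^2$ is a genuine complex-valued function. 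Writing
\begin{equation*}
\omega - \ii F = A\,\omega_\Sigma + B\,\ii\frac{dw\wedge d\bar w}{|w|^2}, \qquad \omega = A_0\,\omega_\Sigma + B_0\,\ii\frac{dw\wedge d\bar w}{|w|^2},
\end{equation*}
with $A,B$ read off from the expression for $\omega-\ii F$ in the proof of Proposition \ref{dHYMODEProp} and $A_0=(1-xf')/x$, $B_0=f''$, one gets immediately $(\omega-\ii F)^2/\omega^2 = AB/(A_0B_0)$.

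First I would pass to the Legendre-transformed variable $\TT=f'(s)$ and then to $t=1/x-\TT$, exactly as in Proposition \ref{dHYMODEProp}, using $1-xf'=xt$, $f''=\phi$, $g'=\nu$ and $g''=\nu'(\TT)\phi$. The key claim, which packages both the real and the imaginary parts of the computation at once, is the factorisation
\begin{equation*}
\frac{(\omega-\ii F)^2}{\omega^2} = \frac{\lp t - \ii H(t)\rp\lp 1 - \ii H'(t)\rp}{t}.
\end{equation*}
Indeed the substitution gives $A_0=t$ and $B_0=\phi$, while the base coefficient reorganises into $A=t-\ii H(t)$ directly from the definition \eqref{definitionH} of $H$, and the fibre coefficient collapses to $B=\phi\lp 1-\ii H'(t)\rp$ after one matches the $k_2$-term and $g''$ against $H'(t)=k_1-\frac{k_2}{t^2}\frac{1-x^2}{x^2}-\nu'(t)$.

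It then remains only to expand $\lp t-\ii H\rp\lp 1-\ii H'\rp = \lp t-HH'\rp - \ii\lp tH'+H\rp$, divide by $t$, multiply by $e^{-\ii\hth}=\cost-\ii\sint$, and collect the real part; this produces exactly
\begin{equation*}
\cost\lp 1 - \frac{H(t)H'(t)}{t}\rp - \sint\lp H'(t) + \frac{H(t)}{t}\rp,
\end{equation*}
as claimed. As a consistency check, taking the imaginary part of the same expression recovers the left-hand side of \eqref{HdHYM}, so that Lemma \ref{RealPartLemma} and Proposition \ref{dHYMODEProp} are really two halves of a single identity.

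The routine but delicate part, and the only genuine obstacle, is the bookkeeping establishing $B=\phi\lp 1-\ii H'(t)\rp$. One must keep track of the two different contributions of $F_0=c_1\omega+c_2\beta$ in the base and fibre directions (the $\beta$-term scales like $(1-xf')^{-1}$ in the base but like $(1-xf')^{-2}$ in the fibre), add the $\ii\del\delbar g$ terms, and then convert $d/d\TT$ into $d/dt$ through the affine relation $t=1/x-\TT$, whose slope $-1$ accounts for the sign with which $\nu'$ enters $H'$. Since this is precisely the manipulation already carried out for the imaginary part in Proposition \ref{dHYMODEProp}, no new analytic difficulty arises; it is purely a matter of careful algebra.
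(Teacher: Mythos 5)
Your proof is correct, and it rests on the same underlying computation as the paper's: the decomposition of $\omega-\ii F_g$ into base and fibre components at a point where $d\log h(z_0)=0$, followed by the Legendre transform and the affine change of variable $t=1/x-\TT$. The difference is one of organization. The paper expands $\re\lp e^{-\ii\hth}(\omega-\ii F_g)^2\rp$ directly as a real expression in $f',f'',g',g''$ (it is the display \eqref{dHYMsVariable} with the roles of $\sint$ and $\cost$ exchanged), and then identifies the two bracketed quantities, after dividing by $\omega^2=2\phi t\,\omega_{\Sigma}\wedge\ii\,dw\wedge d\bar w/|w|^2$, with $1-HH'/t$ and $-(H'+H/t)$. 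You instead keep the computation complex until the last step, via the factorisation $(\omega-\ii F)^2/\omega^2=(t-\ii H)(1-\ii H')/t$: this is valid, since the base coefficient of $\omega-\ii F_g$ equals $t-\ii H(t)$ and the fibre coefficient equals $\phi\lp 1-\ii H'(t)\rp$, and your bookkeeping for the latter is right — including the $(1-xf')^{-2}$ scaling of the $\beta$-term in the fibre direction and the sign with which $\nu'$ enters through $d/d\TT=-d/dt$. (Incidentally, this corrects a typo in the paper's displayed formula for $\omega-\ii F_g$ in the proof of Proposition \ref{dHYMODEProp}, where the fibre term shows $1-xf'$ rather than $(1-xf')^2$; the subsequent display \eqref{dHYMsVariable} uses the correct power.) Your packaging buys something real: taking imaginary and real parts of the single factorised identity yields Proposition \ref{dHYMODEProp} and Lemma \ref{RealPartLemma} simultaneously, whereas the paper carries out the two expansions separately.
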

\begin{proof}
As in the proof of Proposition \ref{dHYMODEProp}, at a point $(z_0,w_0)$ such that $d \text{log } h (z_0)=0$, we have the global identities 
\begin{align*} 
\nonumber&\frac{1}{2}\re \lp e^{-\ii \hth}\lp \omega-\ii F_g\rp^2 \rp / \ii\frac{dw \wedge d\bar{w}}{|w|^2}\wedge\omega_{\Sigma} =  \\
\nonumber&\cost\lp f'' \frac{1-xf'}{x}+\lp g'-\frac{k_2}{x}\frac{ 1-x^2}{ 1-xf'} -\frac{k_1}{x}+k_1f'\rp \lp g''+k_1f''-k_2f''\frac{ 1-x^2}{ \lp 1-xf' \rp^2} \rp \rp \\
&+\sint \lp \frac{1-xf'}{x} \lp k_2f''\frac{ 1-x^2}{ \lp 1-xf' \rp^2} -g''-k_1f''\rp +f''\lp g'-\frac{k_2}{x}\frac{ 1-x^2}{ 1-xf'} -\frac{k_1}{x}+k_1f'\rp \rp .
\end{align*}
and
\begin{equation*}
   \omega^2 = 2f'' \frac{1-xf'}{x }\ii\frac{dw \wedge d\bar{w}}{|w|^2}\wedge\omega_{\Sigma}.
\end{equation*}
In terms of the variable $t$ and the auxiliary function $H(t)$ we have
\begin{align*}
&\lp f'' \frac{1-xf'}{x}+\lp g'-\frac{k_2}{x}\frac{ 1-x^2}{ 1-xf'} -\frac{k_1}{x}+k_1f'\rp \lp g''+k_1f''-k_2f''\frac{ 1-x^2}{ \lp 1-xf' \rp^2} \rp \rp \\
& = 1- \frac{H(t) H'(t)}{t}, 
\end{align*}
respectively
\begin{align*}
&\lp \frac{1-xf'}{x} \lp k_2f''\frac{ 1-x^2}{ \lp 1-xf' \rp^2} -g''-k_1f''\rp +f''\lp g'-\frac{k_2}{x}\frac{ 1-x^2}{ 1-xf'} -\frac{k_1}{x}+k_1f'\rp \rp \\
&= - H'(t) - \frac{H(t)}{t},
\end{align*}
from which our claim follows immediately.
\end{proof}
\begin{lem}\label{scalODELem} Equation \eqref{coupledEquationsRE} becomes the ODE for the momentum profile $\phi(t)$ given by
\begin{align*}
&\lp \frac{2s_{\Sigma}}{t}-\frac{1}{t}\lp 2t\phi(t) \rp'' \rp +  2\alpha\frac{\cos \hth}{\sin^2 \hth} -\frac{\alpha}{\sin^3 \hth} \frac{t}{\sqrt{\lp\cot^2\hth+1\rp\lp t^2+C'\rp}}\\&-\frac{\alpha}{t\sin \hth}{\sqrt{\lp\cot^2\hth+1\rp\lp t^2+C'\rp}}=\hat{s}-\alpha \hat{r},
\end{align*}
with the boundary conditions
\begin{equation*} 
\lim_{t\to \frac{1}{x} \pm 1} \phi (t) = 0, \quad \lim_{t\to \frac{1}{x} \pm 1} \phi' (t) = \mp 1 .
\end{equation*}
\end{lem}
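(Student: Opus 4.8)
The plan is to handle the two terms on the left-hand side of \eqref{coupledEquationsRE} separately, the scalar curvature and the radius, and then to translate the admissibility data of the momentum construction into the boundary conditions.

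First I would record the scalar curvature of the Calabi ansatz metric \eqref{ansatzMetric}. Under the momentum construction on a ruled surface the scalar curvature splits into a base and a fibre contribution: since $\omega_{\Sigma}$ has constant scalar curvature $2 s_{\Sigma}$ and appears rescaled by the factor $\frac{1 - x f'(s)}{x} = t$ at momentum level $\TT$, while the fibre direction carries the momentum profile $\phi$, the standard formula (see \cite{hwang}, \cite{keller}) gives
\begin{equation*}
s(\omega) = \frac{2 s_{\Sigma}}{t} - \frac{1}{t} \lp 2 t \phi(t) \rp '',
\end{equation*}
the derivatives being taken in $t$; the affine change $t = 1/x - \TT$ preserves second derivatives, so this is unambiguous. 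This reproduces the first bracketed group of terms in the claimed ODE, and as a consistency check one can integrate it against $\omega^2 = 2 t \phi(t)\,\omega_{\Sigma} \wedge \ii \frac{dw \wedge d\bar w}{|w|^2}$, using the boundary behaviour of $\phi$, to recover $\hat{s} = 2 x s_{\Sigma} + 2$.

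Next I would treat the radius term. By Lemma \ref{RealPartLemma} it equals $\cos\hth \lp 1 - \frac{H H'}{t} \rp - \sin\hth \lp H' + \frac{H}{t} \rp$, and on the dHYM solution $H$ is given explicitly by \eqref{signSolutions}: writing $R = \sqrt{\lp \cot^2\hth + 1 \rp \lp t^2 + C' \rp}$, we have $H = H_-(t) = t \cott - R$ and hence $H' = \cott - \lp \cot^2\hth + 1 \rp t/R$. Substituting, the products $\frac{H H'}{t}$ and $\frac{H}{t}$ expand into a part polynomial in $\cott$ together with multiples of $t/R$ and of $R/t$. Collecting terms and repeatedly using $\cot^2\hth + 1 = 1/\sin^2\hth$, the polynomial part telescopes to $-2\cos\hth/\sin^2\hth$, the coefficient of $t/R$ collapses to $1/\sin^3\hth$, and the coefficient of $R/t$ to $1/\sin\hth$. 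Multiplying by $-\alpha$ then reproduces exactly the three $\alpha$-dependent terms of the ODE.

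Finally, the boundary conditions are nothing but the extension/positivity conditions \eqref{boundaryConditionsT} of the momentum construction rewritten in the variable $t = 1/x - \TT$: the endpoints $\TT = \pm 1$ correspond to $t = 1/x \mp 1$, and together with the sign reversal $d/dt = -d/d\TT$ the two sign changes cancel, turning \eqref{boundaryConditionsT} into the stated conditions $\lim_{t \to 1/x \pm 1}\phi = 0$, $\lim_{t \to 1/x \pm 1}\phi' = \mp 1$. The only real work is bookkeeping: quoting (or re-deriving) the scalar-curvature formula and carrying out the trigonometric simplification of the radius term without sign errors. Once the explicit $H_-$ is inserted, however, the collapse of that expression into the three stated terms is forced by the single identity $\cot^2\hth + 1 = \csc^2\hth$, so I expect the scalar-curvature formula to be the more delicate ingredient.
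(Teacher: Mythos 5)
Your proposal is correct and follows the same route as the paper: the standard momentum-construction formula for $s(\omega)$, Lemma \ref{RealPartLemma} combined with the explicit solution $H_-$ from \eqref{signSolutions}, and the transfer of the boundary conditions \eqref{boundaryConditionsT} under the affine change $t = 1/x - \TT$. The paper's proof simply asserts these steps without detail, whereas you carry out the trigonometric collapse (via $\cot^2\hth + 1 = 1/\sin^2\hth$) and the sign bookkeeping explicitly, and both are accurate.
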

\begin{proof}
By a standard computation, the scalar curvature of $\omega$ can be expressed in terms of the variable $\tau$ as
\begin{equation*}
    s(\omega) = \frac{2s_{\Sigma}x}{1-x\tau}-\frac{x}{1-x\tau}\lp  2\phi(\tau)\frac{1-x\tau}{x}\rp'',
\end{equation*}
with $\phi(\tau)$ satisfying \eqref{boundaryConditionsT}.
After the affine change of variable $t= 1/x-\TT$, our claim follows directly from Lemma \ref{RealPartLemma} and the explicit formula \eqref{signSolutions} for $H(t)$.
\end{proof}
Setting $\psi(t) = 2t\phi(t)$, we obtain the ODE
\begin{align}\label{ODEscalar}
\nonumber\psi''(t)
&=\nonumber \lp 2\alpha\frac{\cos \hth}{\sin^2 \hth}-\hat{s}+\alpha \hat{r}\rp t-\frac{\alpha}{\sin \hth}{\sqrt{\lp\cot^2\hth+1\rp\lp t^2+C'\rp}}\\ 
& -\frac{\alpha}{\sin^3 \hth}\frac{t^2}{\sqrt{\lp\cot^2\hth+1\rp\lp t^2+C'\rp}}+2s_{\Sigma}
\end{align}
with the boundary conditions
\begin{equation}\label{boundaryConditionsPsi}
\lim_{t\to \frac{1}{x} \pm 1} \psi (t) = 0, \quad \lim_{t\to \frac{1}{x} \pm 1} \psi' (t) = \mp 2 \lp \frac{1}{x} \pm 1 \rp,
\end{equation}
and the positivity condition
\begin{equation}\label{positivityConditionPsi}
\psi(t) > 0, \quad \frac{1}{x}-1<t <\frac{1}{x}+1.
\end{equation}
By integrating twice, we get the general solution of \eqref{ODEscalar} with integration constants $d_0,d_1$
\begin{align}\label{ODEscalarGenSol}
\nonumber\psi(t)=&s_{\Sigma} t^2 +\lp \frac{\alpha}{3}\frac{\cos \hth}{\sin^2 \hth} -\frac{\hat{s}-\alpha \hat{r}}{6} \rp t^3-\frac{\alpha}{3}\sin \hth \lp \lp\cot^2\hth+1\rp\lp t^2+C'\rp\rp^{\frac{3}{2}}\\ &+ d_0 +d_1 t,
\end{align}
which satisfies \eqref{boundaryConditionsPsi} if and only if we set
\begin{align*}
&d_0=-\frac{\lp -2 + s_{\Sigma}x \rp \lp -3 - 3 k_1^2 - 2 k_1 k_2 - 3 k_2^2 + 3 \lp 1 + \lp k_1 - k_2\rp^2\rp x^2 \rp}{3 \lp 1 + \lp k_1 - k_2\rp^2\rp x^3},\\
&d_1 = -\frac{\lp-2 \lp 1 + k_1^2 + k_2^2\rp + \lp 1 + \lp k_1 - k_2\rp^2\rp s_{\Sigma} x\rp \lp -1 + x^2 \rp}{4 k_1 k_2 x^2},\\
& \alpha = \frac{\sqrt{4 k_1^2 + \lp 1 - k_1^2 + k_2^2\rp^2}}{2 \lp 1 + \lp k_1 - k_2\rp^2\rp k_2^2}\lp -2 + s_{\Sigma}x \rp.
\end{align*}
In order to check the positivity condition \eqref{positivityConditionPsi}, we observe that
\begin{equation}\label{psiProp1}
\frac{d^4\psi}{dt^4}= -\frac{3}{4}\alpha \lp \frac{C'}{k_1} \rp^2 \lp t^2+C'\rp^{-\frac{5}{2}} > 0.
\end{equation}
Moreover, setting $t_-=1/x-1$ and $t_+=1/x+1$, we get 
\begin{align}\label{psiProp2}
&\nonumber\psi''(t_-)-\psi''(t_+)\\&=4\frac{-3\lp 1 +\lp k_1+k_2 \rp^2 \rp^2 + \lp 1+\lp k_1-k_2 \rp^2 \rp^2 x^2 \lp x+s_{\Sigma}\rp}{-\lp 1 +\lp k_1+k_2 \rp^2 \rp^2 + \lp 1+\lp k_1-k_2 \rp^2 \rp^2 x^2} > 0, 
\end{align}
since $s_{\Sigma}+x < 3$. 
Thus $\psi''$ is a convex function defined on the interval $\left[ t_- , t_+ \right]$, such that $\psi''(t_-) > \psi''(t_+)$, and this, toghether with \eqref{boundaryConditionsPsi}, implies the positivity condition \eqref{positivityConditionPsi}. 

Finally let us note that if equality holds in our ``stability condition",
\begin{equation*} 
\lp 1 + \lp k_1 + k_2 \rp^2 \rp = x \lp 1 + \lp k_1 -k_2 \rp^2 \rp
\end{equation*}
then the quantity $t^2 + C'$ vanishes at the endpoint $t = 1/x-1$ and by our explicit formulae \eqref{signSolutions}, \eqref{ODEscalarGenSol} we obtain a solution $\omega$, $F$ which is smooth on $X\setminus E_{\infty}$, and such that $F$ extends to a form with $C^{1/2}$ coefficients on $X$, while $\omega$ extends with $C^{1, 1/2}$ coefficients. This completes the proof of Theorem \ref{MainThmSmooth}.\\
\begin{rmk}\label{unifPosRmk} As we will be interested in the small and large limits of the coupled equations, we point out that \eqref{psiProp1} and \eqref{psiProp2} hold uniformly as the scaling parameter $\alpha' \to 0$ and, provided the ``stability condition"
\begin{equation*}
 ( k_1 + k_2 )^2 > x  ( k_1 -k_2 )^2 
\end{equation*}
is satisfied, also for $\alpha' \to \infty$.
\end{rmk}

We can now prove Proposition \ref{HKextensionProp}. We first claim that the K\"ahler form $\omega$ constructed above is real analytic. Recall $\omega$ is obtained by the momentum construction \eqref{ansatzMetric}, 
\begin{equation*}
\omega = \frac{p^*\omega_{\Sigma}}{x}+\ii \del \delbar {f}(s),
\end{equation*}
for a suitable convex function $f\!:\R \to \R$, where we have $s =  \log |w|^2 + \log h(z)$ with respect to bundle adapted holomorphic coordinates $(z, w)$. The hyperbolic metric $\omega_{\Sigma}$ is real analytic, so we can choose a local holomorphic coordinate $z$ such that its coefficients are real analytic. On the other hand the real function $h(z)$ satisfies $-\ii\del_z \delbar_z \log h(z)  = \omega_{\Sigma}$, with the same choice of local coordinate, and so it is also real analytic. So our claim follows if we can show that the function $f\!:\R \to \R$ is real analytic. But $f$ is related to the momentum profile $\phi$ by the ODE
\begin{equation*}
f''(s) = \phi(\tau) = \phi(f'(s)),
\end{equation*}
and the momentum profile $\phi(\tau)$ or our solution is clearly a real analytic function of the variable $\tau \in (-1,1)$ by \eqref{ODEscalarGenSol}. Thus $f(s)$ is real analytic and our claim on $\omega$ follows. In order to see that the curvature form $F$ is also real analytic, recall that it is given by our ansatz \eqref{ansatzCurvature}, $F = F_0 + \ii \del \delbar g(s)$, and that the dHYM equation satisfied by $F$ can be expressed in terms of $g(s)$ as the vanishing of the right hand side of the expression \eqref{dHYMsVariable}. Thus, the real analitycity of $g(s)$ follows from that of $f(s)$. 
\section{Conical singularities}\label{MainThmConicSec}
In the present Section we prove Theorem \ref{MainThmConic}. This extends our existence result Theorem \ref{MainThmSmooth} to allow a K\"ahler form $\omega$ with conical singularities. Our main motivation for this extension is describing examples of solutions to the coupled equations \eqref{coupled_dHYM_Intro} with positive coupling constant $\alpha > 0$. 

We consider again K\"ahler forms $\omega$ given by the momentum construction \eqref{ansatzMetric}, 
\begin{equation*} 
\omega = \frac{p^*\omega_{\Sigma}}{x}+\ii \del \delbar {f}(s),
\end{equation*}
with momentum profile $\phi(\TT) > 0$ defined on the interval $(-1, 1)$.
\begin{lem}\label{ConicalBoundaryLemma} The K\"ahler form $\omega$ extends to a form with conical singularities on $X$, with cone angle $2\pi\beta_0$ along $E_0$, respectively $2\pi\beta_{\infty}$ along $E_{\infty}$, iff the momentum profile satisfies the boundary conditions
\begin{equation*} 
\lim_{\TT\to \pm 1} \phi (\TT) = 0, \, \lim_{\TT\to - 1} \phi' (\TT) = \beta_0,\, \lim_{\TT\to 1} \phi' (\TT) = -\beta_{\infty}.
\end{equation*}  
\end{lem}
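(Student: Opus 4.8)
The plan is to read off the local structure of $\omega$ transverse to each section from the momentum profile, and to match it against the standard local model for a conical Kähler metric. I recall that a Kähler form has a conical singularity of cone angle $2\pi\beta$ along a smooth divisor $\{w=0\}$ precisely when, in a suitable transverse holomorphic coordinate $w$, it is quasi-isometric to the model $\beta^2 |w|^{2(\beta-1)}\,\ii\, dw \wedge d\bar w$ plus a smooth metric along the divisor; equivalently, the multivalued coordinate $w^{\beta}$ flattens the transverse metric. The whole content of the lemma is that the exponent $2(\beta-1)$ governing this transverse behaviour is controlled by the endpoint derivative of $\phi$.

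First I would recall, exactly as in the smooth case \eqref{ansatzMetric}, that the fibre variable $s = \log|w|^2 + \log h(z)$ satisfies $d\TT/ds = f''(s) = \phi(\TT)$, that $\TT$ sweeps out $(-1,1)$ along each fibre, and that $E_0$ (where $w \to 0$) corresponds to $\TT \to -1$ while $E_{\infty}$ (where $w \to \infty$) corresponds to $\TT \to +1$. The vanishing conditions $\lim_{\TT\to\pm 1}\phi(\TT)=0$ are unchanged from the smooth, closed-up case: they are exactly what is needed for the $S^1$-invariant form to extend across the fixed sections of the fibrewise circle action and to represent the classes \eqref{CohClasses} in the sense of currents, independently of the cone angle. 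The new information is entirely in the boundary values of $\phi'$.

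Near $E_0$ I would integrate the relation $ds = d\TT/\phi(\TT)$. Using $\phi(-1)=0$ and $\phi'(-1)=\beta_0$ we have $\phi(\TT) \sim \beta_0(\TT+1)$ as $\TT \to -1$, hence $s \sim \beta_0^{-1}\log(\TT+1) + \text{const}$, i.e. $\TT + 1 \sim c\,e^{\beta_0 s} \sim c'\,|w|^{2\beta_0}$, since $s = 2\log|w| + \log h$ with $h$ smooth and positive. Feeding this into the transverse component $\ii f''(s)\,\frac{dw\wedge d\bar w}{|w|^2} = \ii\,\phi(\TT)\,\frac{dw\wedge d\bar w}{|w|^2}$ gives a leading coefficient $\phi(\TT)/|w|^2 \sim c''\,|w|^{2(\beta_0-1)}$, which is exactly the model cone of angle $2\pi\beta_0$. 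The horizontal term $\frac{1-x\TT}{x}\omega_{\Sigma}$ tends to the smooth positive metric $\frac{1+x}{x}\omega_{\Sigma}$ on $E_0 \cong \Sigma$, while the remaining terms in $\ii\del\delbar f$ (the $\frac{\del_z h\,\delbar_z h}{h^2}$ term and the mixed $dz\wedge d\bar w$ terms) carry an extra factor $\phi(\TT)\to 0$ and are therefore subleading; so the singularity is genuinely conical of the claimed angle. The analysis near $E_{\infty}$ is identical after passing to the coordinate $\tilde w = 1/w$ vanishing on $E_{\infty}$, for which $\frac{dw\wedge d\bar w}{|w|^2} = \frac{d\tilde w\wedge d\bar{\tilde w}}{|\tilde w|^2}$ and $s = -2\log|\tilde w| + \log h$. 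Now $\phi(1)=0$ and $\phi'(1)=-\beta_{\infty}$ give $\phi(\TT)\sim \beta_{\infty}(1-\TT)$, hence $1-\TT \sim c\,|\tilde w|^{2\beta_{\infty}}$ and a transverse coefficient $\sim |\tilde w|^{2(\beta_{\infty}-1)}$, i.e. cone angle $2\pi\beta_{\infty}$. The converse is forced: the cone angle is a quasi-isometry invariant of the transverse metric, so a prescribed angle fixes the exponent, and running the ODE asymptotics backwards pins down $\phi'(\mp 1)$. Setting $\beta_0=\beta_{\infty}=1$ recovers \eqref{boundaryConditionsT}, as it must.

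I expect the main obstacle to be the rigorous justification that the subleading horizontal and mixed terms do not alter the conical type, i.e. confirming that $\omega$ is genuinely quasi-isometric to the transverse model cone times a smooth base metric \emph{uniformly along the divisor}, and not merely along the single fibre point where $d\log h(z_0)=0$. This amounts to controlling the $z$-dependence through $h(z)$ and checking it contributes only bounded smooth corrections, which is exactly where one must pin down the precise definition of conical Kähler metric being used.
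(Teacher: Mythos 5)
Your proof is correct, and in substance it is the same local computation as the paper's: both identify the conical behaviour by comparing the transverse coefficient $f''(s)/|w|^2 = \phi(\tau)/|w|^2$ against the model $|w|^{2(\beta_0-1)}$, via the dictionary $s = \log|w|^2 + \log h(z)$, $f''(s) = \phi(\tau)$, $d\tau/ds = \phi(\tau)$. The only real difference is the direction in which the dictionary is run. The paper postulates the conical asymptotic $f''(s) = c_0 r^{2\beta_0} + A(r)$ with $A(r) = o(r^{2\beta_0})$, checks all four metric components (in particular that the mixed terms are $O(r^{2\beta_0-1})$, hence negligible against the geometric mean of the diagonal ones), and then \emph{differentiates}, using $\phi'(\tau) = f'''(s)/f''(s)$, to recover the boundary conditions on $\phi$; you instead start from the boundary conditions, Taylor-expand $\phi(\tau) = \beta_0(\tau+1) + o(\tau+1)$, and \emph{integrate} $ds = d\tau/\phi(\tau)$ to obtain $\tau + 1 \sim c\,|w|^{2\beta_0}$ and hence the conical form. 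So your detailed direction (boundary conditions imply cone angle) is precisely the one the paper leaves implicit, while your sketched converse (quasi-isometry invariance of the angle, then reversing the asymptotics) is essentially the paper's printed argument; the two together constitute the full ``iff'', and each text is sketchy exactly where the other is careful. One caveat, which applies equally to both arguments: to upgrade the expansion of $\phi$ to the genuine asymptotic $\tau + 1 \sim c\,e^{\beta_0 s}$ (rather than $e^{\beta_0 s(1+o(1))}$) one needs slightly better than $C^1$ behaviour of $\phi$ at the endpoint, e.g. $C^{1,\epsilon}$; symmetrically, the paper's hypothesis $A(r) = o(r^{2\beta_0})$ does not by itself control $rA'(r)$ as needed for $f'''/f'' \to \beta_0$. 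In the paper's application this is harmless, since the profiles produced by \eqref{ODEscalarGenSol} are real analytic up to the endpoints whenever the stability condition holds strictly, but it is worth stating whichever regularity convention for conical metrics you adopt, as you yourself anticipate in your final remark.
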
 
\begin{proof}
For any open neighborhood $U \subset X$, in term of the bundle adapted coordinates $(z,w)$, $E_0 \cap U = \left\{ w=0\right\}$. We assume that, near $r=|w|=0$, $f''$ has the form
\begin{equation*}
    f''(s) = c_0 r^{2\beta_0} + A(r) 
\end{equation*}
with $c_0 \neq 0$ and $A(r)=o(r^{2\beta_0})$.
Then we have
\begin{align*}
& \omega_{z\overline{z}}=  \lp \frac{1-xf'}{x}\rp\omega_{\Sigma,z\overline{z}}+\ii f''\frac{\del h \delbar h}{h^2}=O(1),\\
& \omega_{w\overline{z}}= -\sqrt{-1} \frac{1}{w}f''(s)\delbar h = O(r^{2\beta_0-1}),\\
&  \omega_{z\overline{w}} = \sqrt{-1} \frac{1}{\overline{w}}f''(s)\del h = O(r^{2\beta_0-1}),\\
&  \omega_{w\overline{w}}=\ii r^{2\beta_0-2}\lp 1+A(r)/r^{2\beta_0}\rp,
\end{align*}
hence the metric $\omega$ given by the momentum construction has a conical singularity along $E_0$ of angle $2\pi\beta_0$.
Since $d/ds = \frac{r}{2}d/dr$, $f''(s) = \phi(\tau)$ and $ f'''(s)=\phi(\tau)\phi' (\tau)$, this implies
\begin{equation*}
    \lim_{\tau \to -1} \phi(\tau)=0
\end{equation*}
and
\begin{equation*}
    \lim_{\tau \to -1} \phi'(\tau)=\beta_0.
\end{equation*}
To proof for $E_{\infty}$ is the same up to a change of variable.
\end{proof}
As in the previous Section, it is convenient to consider the reparametrisation
\begin{equation*}
x = \frac{k}{k+ k'}
\end{equation*}
for $k'>0$. Similarly, we introduce the $(1,1)$-forms 
\begin{align*}
& \beta = \frac{x^2}{\left( 1-xf'(s)\right)^2}\left(\frac{1-x {f}'(s)}{x}\omega_{\Sigma} - \ii {f}''(s)\frac{dw \wedge d\bar{w}}{|w|^2} \right),\\
& F = k_1 \omega + \frac{1-x^2}{x^2}k_2 \beta, 
\end{align*}
as well as the ansatz, extending the momentum construction 
\begin{equation*}
F_g = F + \ii \del \delbar g(s).
\end{equation*}
We also denote by $\nu(\tau)$ the image of $g'(s)$ under the Legendre transform diffeomorphism relative to $f(s)$. The proof of the following result is almost identical to the smooth case and we leave it to the reader.
\begin{lem}\label{ConicalMetricLemma} A K\"ahler form $\omega$ with conical singularities as above is a closed $(1,1)$-current on $X$, with cohomology class
\begin{equation*}
[\omega] = 2\pi[ 2 E_0 + k' C].
\end{equation*}  
Similarly, $F$ is a closed $(1, 1)$-current on $X$ with cohomology class
\begin{equation*}
[F] = 2\pi[2 (k_1 - k_2)  E_0  + (2 k k_2 + k' (k_1  + k_2 )  C].
\end{equation*}
Moreover, $\ii\del \delbar g(s)$ extends to a closed $(1,1)$-current on $X$, which has vanishing cohomology class iff $\nu(\tau)$ satisfies the boundary conditions
\begin{equation*} 
\lim_ {\TT \to \pm 1} \nu(\TT) = 0.
\end{equation*}
\end{lem}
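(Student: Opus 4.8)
The plan is to regard $\omega$, $F$ and $\ii\del\delbar g(s)$ as $(1,1)$-currents on $X$, obtained by integration against smooth test forms, and to reduce the identification of their cohomology classes to the period computations already carried out in the smooth case in Sections \ref{momentumSec} and \ref{dHYMSec}.

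First I would check that each of these forms defines a genuine closed $(1,1)$-current. On $X_0 = X\setminus(E_0\cup E_\infty)$ they are smooth and closed, so the only issue is their behaviour along the two divisors. From the asymptotics recorded in the proof of Lemma \ref{ConicalBoundaryLemma} --- in particular $\omega_{w\bar w} = O(r^{2\beta_0 - 2})$ near $E_0$, together with the analogous estimate near $E_\infty$ --- all coefficients are locally integrable once $\beta_0 > 0$, the condition $\beta_\infty \ge 1$ only improving the regularity at $E_\infty$. The same asymptotics apply to $F$ and to $\ii\del\delbar g(s)$, whose fibre component is $\ii\nu'(\TT)\phi(\TT)\,dw\wedge d\bar w/|w|^2$; here one uses that $\nu$ extends smoothly to $[-1,1]$, so that this current is well defined.

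Next I would compute the cohomology classes. Since $H^2(X,\R)$ is spanned by the Poincar\'e duals of $E_0$ and $C$, whose intersection form is nondegenerate by \eqref{intersection}, the class of a closed $(1,1)$-current is determined by its two periods over $C$ and $E_0$. These are evaluated precisely as in the smooth case, as convergent improper integrals over a generic fibre and over the zero section: one finds $\int_C\omega = 2\pi(\lim_{s\to+\infty}f' - \lim_{s\to-\infty}f') = 4\pi$ and $\int_{E_0}\omega = \frac{1+x}{x}\int_\Sigma\omega_\Sigma = 2\pi k\frac{1+x}{x}$, together with the corresponding values for $\beta$. The key point is that all of these expressions depend only on the limits $\lim_{s\to\pm\infty}f'(s) = \pm 1$, which are fixed by the momentum profile and are insensitive to the cone angles --- the latter affecting only $\lim_{\TT\to\pm1}\phi'$. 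Substituting $F = k_1\omega + \frac{1-x^2}{x^2}k_2\beta$ and the reparametrisation \eqref{integralityConditions} then reproduces verbatim the classes \eqref{CohClasses} of $[\omega]$ and $[F]$.

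Finally, for $\ii\del\delbar g(s)$ I would evaluate the same two periods, obtaining $\int_{E_0}\del\delbar g = -2\pi k\lim_{s\to-\infty}g'(s)$ and $\int_C\del\delbar g = 2\pi(\lim_{s\to+\infty}g' - \lim_{s\to-\infty}g')$ exactly as in the smooth setting; both vanish if and only if $\lim_{s\to\pm\infty}g'(s) = 0$, which under the Legendre transform $\nu(\TT) = g'(s)$, $\TT = f'(s)$, is precisely the stated condition $\lim_{\TT\to\pm1}\nu(\TT) = 0$. I expect the genuine obstacle to be the current-theoretic step rather than the algebra: one must confirm that, for the admissible cone angles, integrating the conically singular forms by parts against smooth test forms produces no term supported along $E_0$ or $E_\infty$, so that the currents are closed and their periods really do agree with the improper integrals above. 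This is the standard behaviour of conical K\"ahler currents, and once it is secured the remaining computations are word-for-word those of the smooth case.
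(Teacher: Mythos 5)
Your proposal is correct and follows essentially the same route the paper intends: the paper explicitly leaves this lemma to the reader as ``almost identical to the smooth case,'' and your argument is exactly that reduction --- the periods over $E_0$ and $C$ depend only on $\lim_{s\to\pm\infty}f'(s)=\pm 1$ and on $\lim_{\tau\to\pm 1}\nu(\tau)$, quantities insensitive to the cone angles, which only enter through $\lim_{\tau\to\pm 1}\phi'(\tau)$. Your extra verification that the conically singular forms are locally integrable (via $\omega_{w\bar w}=O(r^{2\beta_0-2})$ with $\beta_0>0$, and the better regularity at $E_\infty$ since $\beta_\infty\geq 1$) and define closed currents with no residual term along the divisors is precisely the detail the paper omits, and it is the right one to supply.
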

We are now in a position to complete the proof of Theorem \ref{MainThmConic}. Let us first note that, precisely as in the proof of Theorem \ref{MainThmSmooth}, under the momentum construction the dHYM equation for $\omega$ and $F$ becomes the ODE \eqref{HdHYM}, together with the boundary conditions \eqref{bdryH}. By Lemma \ref{ConicalMetricLemma}, the cone angles do not play a role in this reduction. It follows that the second of our coupled equations \eqref{coupled_dHYM_Intro} also reduces to the same ODE \eqref{ODEscalar} for a single function $\psi(t) > 0$ of the variable
\begin{equation*}
t = 1/x-\TT
\end{equation*}
appearing in the proof of Theorem \ref{MainThmSmooth}. By Lemma \ref{ConicalBoundaryLemma}, the boundary conditions corresponding to general cone angles $\beta_0$, $\beta_{\infty}$ are 
\begin{equation*} 
\lim_{t\to \pm 1} \psi (t) = 0,\,  \lim_{t\to \frac{1}{x} + 1} \psi' (t) = - 2 \beta_0 \lp \frac{1}{x} + 1 \rp,\,\lim_{t\to \frac{1}{x} - 1} \psi' (t) =  2 \beta_{\infty}\lp \frac{1}{x} - 1 \rp.
\end{equation*}
However, as \eqref{ODEscalar} is second order ODE, this problem is overdetermined. If we consider the general solution \eqref{ODEscalarGenSol} and impose the boundary condition
\begin{equation*}
\lim_{t\to \frac{1}{x} + 1} \psi' (t) = - 2 \beta_0 \lp \frac{1}{x} + 1 \rp
\end{equation*}
corresponding to a cone angle $2\pi \beta_0$ along $E_0$, we find that the integration constant $d_1$ can be expressed in terms of $\beta_0$ and the coupling constant $\alpha$ as
\begin{align}\label{linearTermCoeff}
\nonumber d_1 &= \frac{(x+1) (2 k_1 (x (-2 \beta_0 +s_{\Sigma} (x-1)+1)+1)}{2 k_1 x^2}\\
    & -\alpha\frac{ k_2 (x-1)
   \sqrt{k_1^4-2 k_1^2
    (k_2^2-1)+(k_2^2+1)^2})}{2 k_1 x^2}.
\end{align}
Similarly, imposing the condition
\begin{equation*} 
\lim_{t\to \frac{1}{x} + 1} \psi (t) = 0 
\end{equation*}
and using our expression for $d_1$ gives the relation
\begin{align*}
d_0 &= 4 \alpha\frac{ k_2^2 \left(x^3
   (k_1-k_2)^2+(k_1+k_2)^2+x^3+1\right)}{3 x^3 \sqrt{\left(-k_1^2+k_2^2+1\right)^2+4 k_1^2}}\\
   &-\frac{(x+1)^2
   \sqrt{k_1^4-2 k_1^2
   \left(k_2^2-1\right)+\left(k_2^2+1\right)^2} (x (-6 \beta_0 +s_{\Sigma} (2
   x-1)+2)+2)}{3 x^3 \sqrt{\left(-k_1^2+k_2^2+1\right)^2+4 k_1^2}}.
\end{align*}
Further, imposing the condition 
\begin{equation*} 
\lim_{t\to \frac{1}{x} - 1} \psi (t) = 0 
\end{equation*}
and using our expressions for $d_0$, $d_1$ determines the coupling constant uniquely as
\begin{equation}\label{ConicalCouplingConstant}
\alpha = \frac{\sqrt{\left(-k_1^2+k_2^2+1\right)^2+4 k_1^2} \left(s_{\Sigma} x^2-3 \beta_0
    (x+1)+x+3\right)}{2 k_2^2 x \left((k_1-k_2)^2+1\right)}.
\end{equation}
We can now compute directly that a solution $\psi(t)$ corresponding to a cone angle $2\pi\beta_0$ along $E_0$ satisfies
\begin{equation*}
\lim_{t\to \frac{1}{x} - 1} \psi' (t) = -\frac{2 (\beta_0 +\beta_0 x-2)}{x} = 2  \frac{-2 + \beta_{0}(1+x)}{-1+x}\lp \frac{1}{x} - 1 \rp, 
\end{equation*}
which yields a cone angle $2\pi\beta_{\infty}$ along $E_{\infty}$, with
\begin{equation*}
\beta_{\infty} = \frac{-2 + \beta_{0}(1+x)}{-1+x}.
\end{equation*}
In order to prove the positivity of $\psi(t)$, we consider again \eqref{psiProp1}, with the coupling constant $\alpha$ given by \eqref{ConicalCouplingConstant}. When 
\begin{equation*}
    3\lp 1+x \rp\beta_0 -3 > x\lp1+ s_{\Sigma}x\rp,
\end{equation*}
we construct solutions for $\alpha<0$ and hence $\frac{d^4\psi}{dt^4}>0$. 
Moreover
\begin{align*}
&\psi''(t_-)-\psi''(t_+)\\&=4\frac{\lp 3 \lp1 + x\rp \beta_0 - 3\rp\lp 1 + \lp k_1 + k_2\rp^2\rp^2 - x^2\lp  1 + \lp k_1 - k_2\rp^2\rp^2\lp s_{\Sigma}x^2 + x\rp}{\lp 1 +\lp k_1+k_2 \rp^2 \rp^2x - \lp 1+\lp k_1-k_2 \rp^2 \rp^2 x^3} > 0
\end{align*}
and we can use essentially the same argument given in the proof of Theorem \ref{MainThmSmooth}. When $\alpha>0$, an explicit analysis of the momentum profile is more complicated and the positivity of $\psi(t)$ is best checked with the assistance of a numerical software package (see Figure \ref{Figure1}). This completes the proof of Theorem \ref{MainThmConic}.
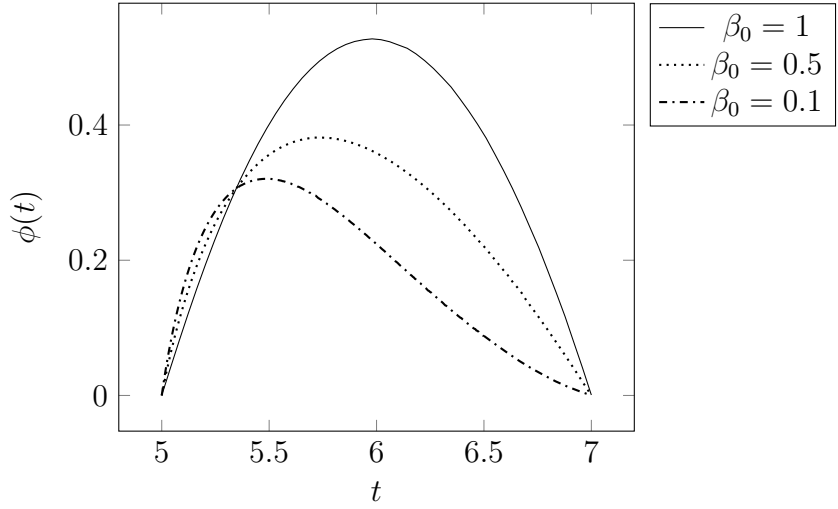
\begin{figure}[h!] 
    \centering
\begin{tikzpicture}
legend pos=south east,
    legend entries={Entry 1,Entry 2,Entry 3}
    ]
\begin{axis}[domain=5:9, samples=100,xlabel=$t$,ylabel=$\phi(t)$, legend pos=outer north east, legend entries={$\beta_0=1$,$\beta_0=0.5$,$\beta_0=0.1$}]

\addplot[color=black, domain=5:7, samples=50]{(-158 + (455*x)/12 + 2*x^2 - (47*x^3)/72 + 
 (5/72)*2.23607*(-(124/5) + x^2)^(3/2))/(2*x)};
 
\addplot[color=black, domain=5:7, samples=100, thick, dotted]{(102.4 - 46.9583*x + 2*x^2 + 0.659722*x^3 - 
 0.822997*(-(124/5) + x^2)^(3/2))/(2*x)};
 
\addplot [color=black, domain=5:6.5, samples=300,thick,dash dot] {(310.72 - 114.858*x + 2*x^2 + 1.70972*x^3 - 
 1.60562*(-(124/5) + x^2)^(3/2))/(2*x)};
 \addplot[color=black, domain=6.5:7, samples=10,thick,dash dot]{(310.72 - 114.858*x + 2*x^2 + 1.70972*x^3 - 
 1.60562*(-(124/5) + x^2)^(3/2))/(2*x)};

\end{axis}
\end{tikzpicture}
    \caption{The momentum profile $\phi(t)$ of the solution when $k_2=-k_1=1$, $h=0$ and $x=1/6$.}
    \label{Figure1}
\end{figure}
\section{Twisted K\"ahler-Einstein equation}
This Section is devoted to the proof of Proposition \ref{tKEProp}, which states explicitly when the equation in \eqref{coupled_dHYM_Intro} involving the scalar curvature of $\omega$ reduces to a twisted K\"ahler-Einstein equation. For a general complex surface, we should require that
\begin{equation}\label{CohomologyTwistedKE}
[\operatorname{Ric}(\omega)] + \frac{\alpha}{2\sin\hth} [F] = \frac{\hat{s} - \alpha\hat{r}}{4} [\omega],
\end{equation}
and we will make this condition explicit in our current setting. 

\begin{lem}\label{ConicalRicciLemma}
For any K\"ahler form $\omega$ on $X$ given by the momentum construction, with cone angle $2\pi\beta_0$ along $E_0$, respectively $2\pi\beta_{\infty}$ along $E_{\infty}$, the cohomology class of $\operatorname{Ric}(\omega)$ is given by
\begin{equation*}
    \left[ \frac{\operatorname{Ric}(\omega)}{2\pi}\right] = \left(\beta_0 + \beta_{\infty} \right) \left[ E_0\right]+ \left(2\left( 1-h\right) -k\beta_{\infty}\right)\left[ C\right].
\end{equation*}
\end{lem}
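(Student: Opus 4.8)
The plan is to compute $\operatorname{Ric}(\omega) = -\ii\del\delbar\log\det(\omega)$ directly from the momentum construction, read off its contributions along $E_0$ and $E_\infty$, and convert the resulting class into the basis $\{[E_0],[C]\}$. Starting from the explicit volume form $\omega^2 = \tfrac{2}{|w|^2}\tfrac{1-xf'(s)}{x}f''(s)\,\omega_{\Sigma}\wedge\ii\,dw\wedge d\bar w$ of Section \ref{momentumSec}, one has on $X\setminus(E_0\cup E_{\infty})$ the identity
\[
\operatorname{Ric}(\omega) = p^*\rho_{\Sigma} + \ii\del\delbar\log|w|^2 - \ii\del\delbar\log\!\Big(\tfrac{1-x\tau}{x}\,\phi(\tau)\Big),
\]
where $\tau = f'(s)$ and $\phi = f''$. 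The term $p^*\rho_{\Sigma}$ is smooth and contributes the class $2(1-h)[C]$ (since $\int_{\Sigma}\rho_{\Sigma} = 2\pi(2-2h)$ and $p^*[\mathrm{pt}]=[C]$), while $\log\tfrac{1-x\tau}{x}$ is a bounded function of the globally defined log-norm $s$, so its $\ii\del\delbar$ is exact and contributes nothing to the class.

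Next I would extract the coefficient of $[E_0]$ by integrating over a general fibre $C$, which is clean because $C\cdot C = 0$ and the fibre meets each section transversally. On $C$ the base term restricts to zero, $\ii\del\delbar\log|w|^2$ integrates to $0$ (its Poincar\'e--Lelong masses at $w=0$ and $w=\infty$ cancel), and the last term reduces to a total $s$-derivative:
\[
\tfrac{1}{2\pi}\int_C \operatorname{Ric}(\omega) = -\Big[\tfrac{d}{ds}\log\!\big(\tfrac{1-x\tau}{x}\phi\big)\Big]_{s=-\infty}^{s=+\infty} = -\Big[\phi'(\tau) - \tfrac{x\phi}{1-x\tau}\Big]_{\tau=-1}^{\tau=1}.
\]
By Lemma \ref{ConicalBoundaryLemma} the boundary values are $\phi'(-1)=\beta_0$, $\phi'(1)=-\beta_{\infty}$ (with $\phi\to 0$ at both ends), so this equals $\beta_0+\beta_{\infty}$, matching the claimed coefficient.

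For the coefficient of $[C]$ I would deliberately avoid restricting the Ricci current to a section, and this is where the main obstacle lies: since $E_0\cdot E_0 = k$ and $E_{\infty}\cdot E_{\infty} = -k$ are nonzero, the normal (conical) singularity of $\operatorname{Ric}(\omega)$ produces a self-intersection residue that a naive pullback to a section would miscount (one checks that restricting the base components to $E_{\infty}$ gives $2(1-h)+k\beta_{\infty}$, with the wrong sign). Instead I would argue cohomologically: the full distributional current $-\ii\del\delbar\log\det(\omega)$ represents $c_1(X)$, while the conical model (a cone of angle $2\pi\beta$ has Ricci $2\pi(1-\beta)\delta$ at its tip, localised along $E_0$ and $E_{\infty}$ through $\phi\sim\mathrm{dist}^{2\beta_0},\,\mathrm{dist}^{2\beta_{\infty}}$ by Lemma \ref{ConicalBoundaryLemma}) shows that the smooth Ricci form of the statement is this current minus $2\pi\big((1-\beta_0)[E_0] + (1-\beta_{\infty})[E_{\infty}]\big)$. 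Hence
\[
\Big[\tfrac{\operatorname{Ric}(\omega)}{2\pi}\Big] = c_1(X) - (1-\beta_0)[E_0] - (1-\beta_{\infty})[E_{\infty}].
\]
Computing $c_1(X) = 2[E_0] + (2-2h-k)[C]$ by adjunction on $E_0$ and $C$ (using $K_X\cdot E_0 = 2h-2-k$ and $K_X\cdot C = -2$ together with \eqref{intersection}) and substituting $[E_{\infty}] = [E_0]-k[C]$ yields $(\beta_0+\beta_{\infty})[E_0] + (2(1-h)-k\beta_{\infty})[C]$, consistent with the $[E_0]$-coefficient found above. A self-intersection-free alternative, if one prefers to stay purely within the momentum framework, is to pair $\operatorname{Ric}(\omega)$ against a smooth auxiliary K\"ahler form coming from the $\beta_0=\beta_{\infty}=1$ construction and solve the resulting single linear relation for the $[C]$-coefficient.
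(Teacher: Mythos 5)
Your proposal is correct, but it takes a genuinely different route from the paper for half of the computation. The paper computes $\operatorname{Ric}(\omega)=-\ii\del\delbar\log\det\omega$ explicitly in momentum coordinates and evaluates \emph{both} intersection numbers directly: $\int_C\operatorname{Ric}(\omega)=2\pi(\beta_0+\beta_\infty)$, exactly as in your total-derivative computation, and in addition $\int_{E_0}\operatorname{Ric}(\omega)=2\pi\left(2(1-h)+k\beta_0\right)$ by restricting the smooth Ricci form to the zero section (using $\phi(-1)=0$, $\phi'(-1)=\beta_0$), after which the class is read off from \eqref{explicitCoordinates}. You instead obtain the $[C]$-coefficient from the global identity $\left[\operatorname{Ric}(\omega)/2\pi\right]=c_1(X)-(1-\beta_0)[E_0]-(1-\beta_\infty)[E_{\infty}]$ together with adjunction and $[E_\infty]=[E_0]-k[C]$; this is legitimate, since the asymptotics $f''(s)=c_0r^{2\beta_0}+o(r^{2\beta_0})$ from Lemma \ref{ConicalBoundaryLemma} give $\log\det\omega=(2\beta_0-2)\log r+O(1)$ near $E_0$, so by Poincar\'e--Lelong the distributional Ricci current representing $2\pi c_1(X)$ differs from the smooth part exactly by $2\pi(1-\beta_0)\delta_{E_0}+2\pi(1-\beta_\infty)\delta_{E_\infty}$, the bounded remainder contributing an exact current. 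Your route buys a conceptual, construction-independent argument, at the cost of invoking the standard conical-residue machinery. However, your stated reason for avoiding the section integral rests on a sign slip: since $\phi'(1)=-\beta_\infty$ (not $+\beta_\infty$), restricting the smooth Ricci form to $E_\infty$ gives $2\pi\left(2(1-h)-k\beta_\infty\right)$, which \emph{is} the correct pairing, and likewise restriction to $E_0$ gives the correct value $2\pi\left(2(1-h)+k\beta_0\right)$ — the components of the smooth Ricci form ($\phi'(\tau)$, $\rho_\Sigma$, $x\phi/(1-x\tau)$) extend continuously across the sections, so no self-intersection residue is miscounted. This is precisely what the paper exploits, and it makes its argument more elementary, while your cohomological detour remains sound but unnecessary.
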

\begin{proof}
We recall that 
\begin{align*}
    \operatorname{Ric}(\omega) &= -\sqrt{-1}\partial \overline{\partial} \log \det \omega\\
    &= -\sqrt{-1}\partial \overline{\partial} \log \det \left( \frac{2}{|w|^2}\left( \frac{1}{x}-f'\right)f''\omega_{\Sigma}\right)\\
    &=-\sqrt{-1}\partial \overline{\partial} \log \det \left( \left( \frac{1}{x}-f'\right)f''\omega_{\Sigma}\right),
\end{align*}
hence, by a straightforward calculation, we get
\begin{equation*}
    -\sqrt{-1}\partial_z \partial_{\overline{z}} \log \det \left( \left( \frac{1}{x}-f'\right)f''\omega_{\Sigma}\right) = \left( \frac{f'''}{f''}+ \rho_{\Sigma}-\frac{x}{\left(1-xf' \right)}f''\right)\omega_{\Sigma}
\end{equation*}
and 
\begin{equation*}
    -\sqrt{-1}\partial_w \partial_{\overline{w}} \log \det \left( \left( \frac{1}{x}-f'\right)f''\omega_{\Sigma}\right) = \sqrt{-1} \frac{1}{|w|^2}\frac{d}{ds}\left( \frac{x}{\left(1-xf' \right)}f''-\frac{f'''}{f''} \right)dw\wedge d\overline{w}.
\end{equation*}
Using the identities $f'''(s)/f''(s)=\phi'(\tau)$, $f''(s)=\phi(\tau)$ and the boundary conditions required for the momentum profile and its derivative, we compute
\begin{align*}
    \int_{E_0} \operatorname{Ric}(\omega) &= \left( \phi'(-1) +\frac{2(1-h)}{k} -\frac{\phi(-1)}{x^{-1}-1}\right)\int_{\Sigma}\omega_{\Sigma}dz\wedge d\overline{z}\\
    &=2\pi \left( 2(1-h)+k\beta_0\right)
\end{align*}
and
\begin{align*}
    \int_C \operatorname{Ric}(\omega) &= \int_{\mathbb{C} \setminus0} \sqrt{-1}\frac{dw \wedge d \overline{w}}{|w|^2}\frac{d}{ds} \left( \frac{x}{\left(1-xf' \right)}f'' -\frac{f'''}{f''}\right)\\
    &= \int_{-\infty}^{\infty} \int_0^{2\pi} \frac{d}{dr}\left( \frac{x}{\left(1-xf' \right)}f'' -\frac{f'''}{f''}\right)dr\wedge d\theta\\
    &=2\pi \left(\beta_0 + \beta_{\infty} \right),
\end{align*}
so our claim follows directly from \eqref{explicitCoordinates}.
\end{proof}
For the following computations, it is convenient to introduce the quantity 
\begin{equation*}
    \Gamma =\frac{3+x+s_{\Sigma}x^2-3(1+x)\beta_0}{x}= 4-6\beta_0 +3\frac{k'}{k}(1-\beta_0)+2\frac{1-h}{k+k'}.
\end{equation*}
Using Lemma \ref{ConicalMetricLemma} and Lemma \ref{ConicalRicciLemma}, we can then rephrase the general condition \eqref{CohomologyTwistedKE} as the system of equations
\begin{align}\label{SystemTwistedKE}
\begin{cases}
&\frac{1+\left(k_1+k_2 \right)^2}{2k_1k_2}\Gamma=2+4\frac{1-h}{k+k'}-2\left(\beta_0+\beta_{\infty} \right), \\
&\frac{1+\left(k_1+k_2 \right)^2}{2k_1k_2}\Gamma\left(\frac{k'}{2}+k \right)=2\left( 1-h \right)\frac{2k+k'}{k+k'}-2k\beta_{\infty}-k'.
\end{cases}
\end{align}
Notice that the two equations in \eqref{SystemTwistedKE} actually coincide when
\begin{equation*}
    -2\left(1-h \right)\frac{2k+k'}{k+k'}+2k\beta_{\infty}+k'=\left(2k+k' \right)\left(\beta_0+\beta_{\infty}-1-2\frac{1-h}{k+k'} \right),
\end{equation*}
or, equivalently,
\begin{equation}\label{CompatibilitySystem}
    2\left(k'+k\right) = \left(2k+k' \right)\beta_0+k'\beta_{\infty}.
\end{equation}
Recall however that in order to have solutions to our equations in the momentum construction the cone angle $2\pi \beta_0$ is not arbitrary but must satisfy
\begin{equation*}
    \beta_{\infty} = \frac{-2+\beta_0(1+x)}{(-1+x)},
\end{equation*}
in which case \eqref{CompatibilitySystem} holds automatically. Then the general condition \eqref{CohomologyTwistedKE} corresponds to
\begin{align}\label{CohomologyTwistedKE2}
\nonumber    \frac{1+\left(k_1+k_2 \right)^2}{2k_1k_2}\Gamma&=2+4\frac{1-h}{k+k'}-2\left(\beta_0+\beta_{\infty} \right)\\&=2\left(2\frac{1-h}{k+k'}+2\frac{k}{k'}\left(\beta_0-1 \right)-1 \right).
\end{align}
In order to show that this coincides with the condition \eqref{tKEPropCond} spelled out in Proposition \ref{tKEProp}, we rewrite the latter as
\begin{equation*}
    \left( -1+x\right)\left( 1+\left(k_1+k_2\right)^2\right)\Gamma -4k_1k_2\left(1+s_{\Sigma}-x\left(-1+s_{\Sigma}+2\beta_0 \right) \right)=0,
\end{equation*}
which implies
\begin{align*}
    \frac{ 1+\left(k_1+k_2\right)^2}{2k_1k_2}\Gamma &= 2 \frac{1}{-1+x}\left(1+s_{\Sigma}-x\left(-1+s_{\Sigma}+2\beta_0 \right)\right)\\
    &=2\left(s_{\Sigma}x+2\beta_0 \frac{x}{1-x}+\frac{1+x}{-1+x} \right)\\
    &= 2\left(s_{\Sigma}x+2\frac{x}{1-x}\left(\beta_0-1 \right)-1 \right)\\
    &=2\left(2\frac{1-h}{k+k'}+2\frac{k}{k'}\left(\beta_0-1 \right)-1 \right).
\end{align*}
Reading these identities backwards shows that the two conditions \eqref{tKEPropCond}, \eqref{CohomologyTwistedKE2} are indeed equivalent.

It remains to establish the second claim of Proposition \ref{tKEProp}, namely that the condition \eqref{CohomologyTwistedKE2} actually holds for infinitely many solutions of the system \eqref{coupled_dHYM_Intro}. It is convenient to rewrite \eqref{CohomologyTwistedKE2} in the form
\begin{equation}\label{CohomologyTwistedKE4}
    F(k_1,k_2)=H(k,k',h,\beta_0),
\end{equation}
with 
\begin{equation*}
    F(k_1,k_2)=\frac{1+\left(k_1+k_2 \right)^2}{2k_1k_2}
\end{equation*}
and
\begin{align*}
   H(k,k',h,\beta_0)&=\frac{2}{\Gamma}\left(2\frac{1-h}{k+k'} +1-\beta_0-\beta_{\infty}\right)\\
    &=2\frac{2\frac{1-h}{k+k'}+2(\beta_0-1)\frac{k}{k'}-1}{2\frac{1-h}{k+k'} +3\frac{k'}{k}(1-\beta_0)+4-6\beta_0}.
\end{align*}
We assume that $k_2<0$, so the stability condition \eqref{stabilityRuledSurfaces} is automatically satisfied, and the system \eqref{coupled_dHYM_Intro} is solvable. We observe that, under this assumption, the l.h.s. of \eqref{CohomologyTwistedKE4} satisfies
\begin{equation*}
   F(k_1,k_2)>2.
\end{equation*}
On the other hand $H(k,k',h,\beta)$, as a function of the single variable $\beta$, has a vertical asymptote at 
\begin{equation*}
    \overline{\beta}=\frac{\frac{4}{3}k+k'+\frac{2(1-h)k}{3(k+k')}}{k'+2k}
\end{equation*}
and it is easy to check that $0< \overline{\beta}<1$, for $k'>M(k,h)>0$.
Moreover, at $\beta=1$ we have
\begin{equation*}
    0<H(k,k',h,1)= \frac{k+k'+2\lp h-1\rp}{k+k'+h-1}<2,
\end{equation*}
and
\begin{equation*}
    \frac{d}{d\beta}H(k,k',h,1)= \frac{4 k}{k' \lp -2 + \frac{2 \lp 1 - h\rp}{k + k'} \rp}+\frac{2\lp6+\frac{3k}{k'} \rp \lp -1+ \frac{2 \lp 1 - h\rp}{k + k'}\rp}{\lp -2 + \frac{2 \lp 1 - h\rp}{k + k'} \rp^2}<0,
\end{equation*}
(Figure \ref{Figure2} shows the graph of  $H\lp\beta \rp$ for $k=k'=1$ and $h=6$).
\begin{figure}[h!]
    \centering
\begin{tikzpicture}
\begin{axis}[
    axis lines = left,
    xlabel = $\beta$,
    xtick={2/9},
    xticklabels={$\overline{\beta}$},
    extra x ticks={0,0.5,1},
    extra x tick labels={0,0.5,1},
    ylabel = {},
    ytick={1.71},
    yticklabels={$H(\beta=1) \approx$1.71},
    extra y ticks={-20,20},
    extra y tick labels={-20,20},
]
\addplot [
    domain=0:(2/9-0.05), 
    samples=50, 
    color=black,
]
{(-16+4*x)/(2-9*x)};
\addplot [
    domain=(2/9+0.05):1, 
    samples=50, 
    color=black,
]
{(-16+4*x)/(2-9*x)};
 \addplot[dashed, black, samples=2,domain=0:1, ] {1.71};
\draw[dashed] ({axis cs:2/9,0}|-{rel axis cs:0,0}) -- ({axis cs:2/9,0}|-{rel axis cs:0,1});
\end{axis}
\end{tikzpicture}
    \caption{$H(k,k',h,\beta)$ for $k=k'=1$ and $h=6.$}
    \label{Figure2}
\end{figure}
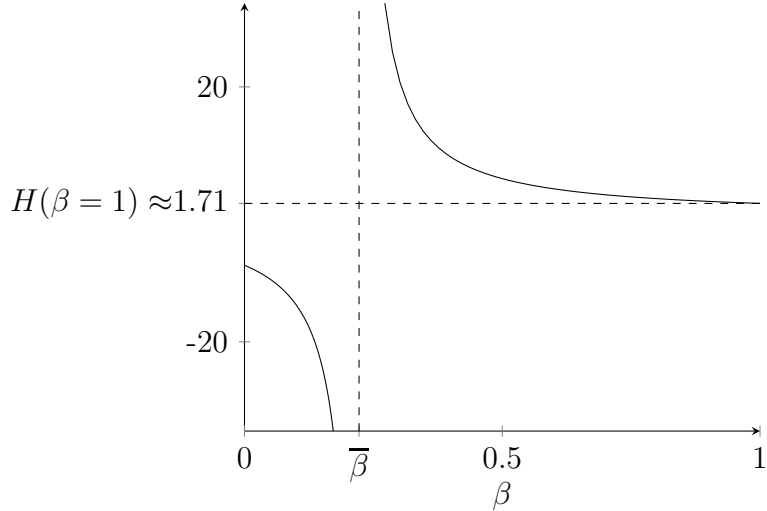
This implies that
\begin{equation*}
    F(k_1,k_2) \in (2,\infty) \subset \operatorname{im} H|_{\beta \in \lp 0,1\rp},
\end{equation*}
which completes the proof of Proposition \ref{tKEProp}.

\begin{rmk} A direct computation using \eqref{linearTermCoeff}, shows that the condition \eqref{tKEPropCond} holds precisely when the coefficient of the linear term $d_1$ in $\psi(t)$ vanishes, i.e. a solution $\omega$ is twisted K\"ahler-Einstein precisely when the linear term is missing from the momentum profile.
\end{rmk}
\section{Large and small radius limits}\label{LargeRadiusSec}
Let us first prove Theorem \ref{LargeRadiusThm}. As we already observed, the ``slope parameter" $\alpha'$ appears in the coupled equations \eqref{coupled_dHYM_Intro_scale} simply as a scale factor for the curvature form $F$. In other words, a pair $\omega, F$ solves \eqref{coupled_dHYM_Intro_scale} iff the pair $\omega, \alpha' F$ solves \eqref{coupled_dHYM_Intro}: the cohomology parameters are simply rescaled $(k_1, k_2)\mapsto (\alpha' k_1, \alpha' k_2)$. Thus, according to Theorem \ref{MainThmSmooth}, there exists a (unique) solution of \eqref{coupled_dHYM_Intro_scale} given by the momentum construction iff the ``stability condition"
\begin{equation*} 
  1 + (\alpha')^2\lp k_1 + k_2 \rp^2   > x \lp 1 + (\alpha')^2\lp k_1 -k_2 \rp^2 \rp
\end{equation*}
holds. Since $x < 1$ by construction, this inequality holds for all sufficiently small $\alpha'$, depending only on $k_1, k_2$ and $k'$. Let us write $\omega_{\alpha'}, F_{\alpha'}$ for the corresponding family of solutions. 

The attached function $H_{\alpha'}(t) = H_{-}(t)$ appearing in \eqref{HdHYM} is also obtained from \eqref{signSolutions} simply by rescaling $(k_1, k_2)\mapsto (\alpha' k_1, \alpha' k_2)$, and so it can be computed explicitly as
\begin{align*}
\nonumber&H_{\alpha'}(t) = t \cott_{\alpha'} - \sqrt{\lp\cot^2\hth_{\alpha'}+1\rp\lp t^2+C'_{\alpha'}\rp},\\
\nonumber&e^{\ii \hth_{\alpha'}}= \frac{\left(1-(\alpha')^2k_1^2+(\alpha')^2k_2^2 -2\ii  \alpha'k_1 \right)}{\sqrt{\left(1-(\alpha')^2 k_1^2+(\alpha')^2k_2^2\right)^2 +\left(2(\alpha')^2 k_1 \right)^2}},\\
\end{align*}
\begin{align}\label{explicitH}
\nonumber&\cott_{\alpha'} = -\frac{ - (\alpha')^2 k_1^2 +(\alpha')^2 k_2^2 +1 }{2 \alpha' k_1  },\\
&C'_{\alpha} =4 (\alpha')^2 k_1 k_2 
   \left(\frac{1}{x^2 \left((\alpha' k_1  - \alpha' k_2
   )^2+1\right)}-\frac{1}{(\alpha' k_1 + \alpha' k_2 
   )^2+1}\right).
\end{align}
By elementary computations using these explicit formulae, recalling that we also have $k_1<0$, we find
\begin{equation}\label{Hexpansion}
H_{\alpha'}(t) = \left(k_1 t + \frac{k_2}{t} \left(-1 + \frac{1}{x^2}\right)\right)\alpha' + (\alpha')^2 R(\alpha', t)
\end{equation} 
for some function $R(\alpha',t)$, smooth up to $\alpha' = 0$. 

As a first consequence, we can show that the sequence of K\"ahler forms $\omega_{\alpha'}$ converges smoothly to a K\"ahler form $\omega$ as $\alpha' \to 0$. It will be enough to show the smooth convergence of the momentum profiles $\phi_{\alpha'}(t)$. According to Lemma \ref{scalODELem} and the subsequent explicit formulae for the coupling constant $\alpha$ and average radius $\hat{r}$, the profile $\phi_{\alpha'}(t)$ is obtained by integrating twice the identity
\begin{align}\label{profileExpansion}
\nonumber\frac{2s_{\Sigma}}{t}-\frac{1}{t}\lp 2t\phi(t) \rp'' &= \alpha \left(\cos \hth \lp 1- \frac{H(t) H'(t)}{t} \rp -\sin \hth \lp H'(t) + \frac{H(t)}{t} \rp\right) \\&+ \hat{s}-\alpha\hat{r},
\end{align}
where all quantities are understood as evaluated at $(\alpha'k_1, \alpha' k_2 )$, and in particular
\begin{align}\label{explicitAlpha}
\nonumber&\alpha = \frac{\sqrt{4 (\alpha')^2 k_1^2 + \lp 1 - (\alpha')^2k_1^2 + (\alpha')^2k_2^2\rp^2}}{2 \lp 1 + \lp \alpha'k_1 - \alpha'k_2\rp^2\rp (\alpha')^2k_2^2}\lp -2 + s_{\Sigma}x \rp\,\\
&\hat{r} = \sqrt{\lp1-(\alpha')^2k_1^2+(\alpha')^2k_2^2 \rp ^2+4(\alpha')^2k_1^2}.
\end{align}
By the latter explicit formulae and \eqref{Hexpansion}, the quantity
\begin{equation*}
\alpha \left(\cos \hth \lp 1- \frac{H(t) H'(t)}{t} \rp -\sin \hth \lp H'(t) + \frac{H(t)}{t} \rp - \hat{r}\right)
\end{equation*}
has a smooth limit as $\alpha' \to 0$, so the same holds for the right hand side of \eqref{profileExpansion} and for the momentum profile $\phi(t) = \phi_{\alpha'}(t)$. The positivity of $ \phi_{\alpha'}(t)$ and its limit for $\alpha' \to 0$ follows from Remark \ref{unifPosRmk}.
We can now show that the curvature forms $F_{\alpha'}$ also converge smoothly as $\alpha'\to 0$. By construction we have $F_{\alpha'} = F_{0,\alpha'} + \ii \del \delbar (\alpha')^{-1} g_{\alpha'}(s)$, where $F_0 = c_1 \omega_{\alpha'} + c_2 \beta_{\alpha'}$ and the potential $g_{\alpha'}(s)$ corresponds to the solution for the parameters $(\alpha' k_1, \alpha'k_2)$ (i.e. for the cohomology class $\alpha'[F_0]$).  By the smooth convergence of the K\"ahler forms $\omega_{\alpha'}$, which we just established, it will be enough to show that the potentials $(\alpha')^{-1} g_{\alpha'}(s)$ converge smoothly. In fact they converge smoothly to the zero potential. Indeed by \eqref{definitionH} and \eqref{Hexpansion} we have
\begin{align*}
(\alpha')^{-1}g_{\alpha'}(s) &= (\alpha')^{-1} \nu_{\alpha'}(\tau)\\
&= (\alpha')^{-1}\left(\alpha' k_1 t+\alpha'\frac{k_2}{t}\frac{1-x^2}{x^2} - H_{\alpha'}(t)\right)\\
&= \alpha' R(\alpha', t) 
\end{align*}
where $R(\alpha', t)$ is smooth in a neighbourhood of $\alpha' = 0$. It follows that we have, smoothly as $\alpha' \to 0$,
\begin{equation*}
F_{\alpha'} \to F_0 = c_1 \omega + c_2 \beta,
\end{equation*}
which is indeed a solution of the HYM equation $\Lambda_{\omega} F = \mu$.

Finally, this allows to write down the equation satisfied by the limit K\"ahler form $\omega$. Recall $\omega_{\alpha'}$ solves the equation
\begin{equation*} 
s(\omega_{\alpha'}) -\alpha_{\alpha'} \re \lp e^{-\ii \hth_{\alpha'}}\frac{\lp\omega_{\alpha'}-\ii \alpha' F_{\alpha'} \rp^{2}}{\omega^2_{\alpha'}} \rp  = \hat{s}-\alpha_{\alpha'} \hat{r}_{\alpha'}.
\end{equation*}
Expanding around $\alpha' = 0$ we find
\begin{align*}
&\re\left(e^{-\ii\hth_{\alpha'}}( \omega_{\alpha'} - \ii \alpha' F_{\alpha'})^2\right)\\& =  \omega^2_{\alpha'} -\left( F_{\alpha'} \wedge F_{\alpha'} - z_1\omega_{\alpha'}\wedge F_{\alpha'} +z_2\omega^2_{\alpha'}\right )(\alpha')^2 + O(\alpha'^{4})  
\end{align*} 
for certain cohomological constants $z_1$, $z_2$. Similarly, 
\begin{align*}
&\alpha_{\alpha'} = \frac{1}{(\alpha')^2} \left(\frac{-2 + s_{\Sigma}x}{2 k_2^2} + O(\alpha')\right),\\
&\hat{r}_{\alpha'} = 1 + O(\alpha').
\end{align*}
Thus, taking the smooth limit as $\alpha' \to 0$, and using our result that the limit curvature form satisfies $\Lambda_{\omega} F = \mu$, we see that $\omega$ satisfies 
\begin{equation*}
s(\omega) + \tilde{\alpha} \Lambda^2_{\omega}(F\wedge F) = c
\end{equation*}
where
\begin{equation*}
\tilde{\alpha} = \frac{-2 + s_{\Sigma}x}{2 k_2^2}\\
\end{equation*}
and $c$ is a cohomological constant. This completes the proof of Theorem \ref{LargeRadiusThm}.\\

The proof of Theorem \ref{SmallRadiusThm} is quite similar. Our assumption 
\begin{equation*}
( k_1 + k_2 )^2 > x  ( k_1 -k_2 )^2
\end{equation*}
implies that, for any $\alpha'>0$, the ``stability condition"
\begin{equation*}
 1+ ( \alpha' k_1 + \alpha' k_2 )^2 > x(1 + ( \alpha' k_1 - \alpha' k_2 ))^2
\end{equation*}
holds. Thus, by Theorem \ref{MainThmSmooth},  the coupled equations \eqref{coupled_dHYM_Intro_scale} are uniquely solvable with the momentum construction. We denote the corresponding solutions by $\omega_{\alpha'}$, $F_{\alpha'}$ as before. By \eqref{explicitH}, as $\alpha' \to \infty$, we have an expansion
\begin{align}\label{HexpansionSmall}
\nonumber H_{\alpha'}(t) &= \frac{\sqrt{\left(k_1^2-k_2^2\right)^2 \left(k_1 k_2
   \left(\frac{4}{x^2 (k_1-k_2)^2}-\frac{4}{(k_1+k_2)^2}\right)+t^2\right)}+k_1^2 t-k_2^2 t}{2 k_1} \alpha' \\
&+ S((\alpha')^{-1}, t)
\end{align}
where $S(y, t)$ is a smooth function near $y = 0$. By this expansion and \eqref{explicitAlpha}, the quantity
\begin{equation*}
\alpha \left(\cos \hth \lp 1- \frac{H(t) H'(t)}{t} \rp -\sin \hth \lp H'(t) + \frac{H(t)}{t} \rp - \hat{r}\right)
\end{equation*}
has a smooth limit as $\alpha' \to \infty$, so the same holds for the right hand side of \eqref{profileExpansion} and for the momentum profile $\phi(t) = \phi_{\alpha'}(t)$. Since we are assuming $( k_1 + k_2 )^2 > x  ( k_1 -k_2 )^2$, $\phi_{\alpha'}(t)$ and its limit satisfy the positivity condition, by Remark \ref{unifPosRmk}. Thus the sequence of K\"ahler forms $\omega_{\alpha'}$ converges smoothly to a K\"ahler form $\omega$ as $\alpha' \to \infty$.

Considering now the curvature forms $F_{\alpha'} = F_{0,\alpha'} + \ii \del \delbar (\alpha')^{-1} g_{\alpha'}(s)$ as before, we find 
\begin{align*}
(\alpha')^{-1}g_{\alpha'}(s) &= (\alpha')^{-1} \nu_{\alpha'}(\tau)\\
&= (\alpha')^{-1}\left(\alpha' k_1 t+\alpha'\frac{k_2}{t}\frac{1-x^2}{x^2} - H_{\alpha'}(t)\right)\\
&= k_1 t + \frac{k_2}{t}\frac{1-x^2}{x^2} - (\alpha')^{-1} H_{\alpha'}(t),  
\end{align*}
where by \eqref{HexpansionSmall} we have the smooth convergence, as $\alpha' \to \infty$,
\begin{equation*}
(\alpha')^{-1} H_{\alpha'}(t) \to \frac{k^2_1 - k^2_2}{2 k_1} K_{\pm}(t),
\end{equation*}
where
\begin{align*}
&K_{\pm}(t) = t \pm \sqrt{t^2 + \hat{C}},\\
&\hat{C} = 4 k_1 k_2\left( \frac{1}{x^2 (k_1-k_2)^2}-\frac{1}{(k_1+k_2)^2}\right)
\end{align*}
and the sign $\pm$ is that of the quantity $k^2_1 - k^2_2$. Thus, by the convergence of the K\"ahler forms $\omega_{\alpha'}$, the curvature forms $F_{\alpha'}$ also have a smooth limit $F$ as $\alpha' \to \infty$. 

Finally we may write down the equations satisfied by the limit K\"ahler form $\omega$ and curvature form $F$. By our previous results we have expansions, as $\alpha' \to \infty$,
\begin{align*}
&\im\left(e^{-\ii\hth_{\alpha'}}( \omega_{\alpha'} - \ii \alpha' F_{\alpha'})^2\right) = \left(Z_1 \omega_{\alpha'}\wedge F_{\alpha'}-Z_2 F^2_{\alpha'}\right)\alpha' + O(1),\\
&\re\left(e^{-\ii\hth_{\alpha'}}( \omega_{\alpha'} - \ii \alpha' F_{\alpha'})^2\right) = F^2_{\alpha'} (\alpha')^2 + O(1),
\end{align*}
for some cohomological constants $Z_1, Z_2$. Similarly, 
\begin{align*}
&\alpha_{\alpha'} = \frac{|k^2_1 - k^2_2|}{2(k_1 - k_2)^2 k^2_2}\frac{\lp -2 + s_{\Sigma}x \rp}{(\alpha')^2} + O\left(\frac{1}{(\alpha')^3}\right),\\
&\hat{r}_{\alpha'} = |k^2_1 - k^2_2|(\alpha')^2 + O(\alpha').
\end{align*} 
Thus, passing to the limit as $\alpha' \to \infty$ in the equations \eqref{coupled_dHYM_Intro_scale}, we find that $\omega$, $F$ satisfy the equations
\begin{equation*}
\begin{cases}
F \wedge \omega = c_1 F^2\\
s(\omega) - \alpha_{\infty}\frac{F^2}{\omega^2}= c_2  
\end{cases}
\end{equation*}
for a unique $\alpha_{\infty}$ and cohomological constants $c_1$, $c_2$. Using the first equation, the second can also be written in the twisted cscK form as
\begin{equation*}
s(\omega) - \hat{\alpha} \Lambda_{\omega} F = c_2  
\end{equation*}
for some unique $\hat{\alpha}$. This completes the proof of Theorem \ref{SmallRadiusThm}.

\end{document}